\documentclass[11pt]{article}

\setlength{\textheight}{235mm}
 \setlength{\textwidth}{175mm}
 \setlength{\oddsidemargin}{-7mm}
 \setlength{\evensidemargin}{-7mm}
 \setlength{\topmargin}{-15mm}
 \usepackage{array}
 \usepackage{enumitem}
\newcolumntype{C}[1]{>{\centering\arraybackslash}m{#1}}
\usepackage{subfig}
\usepackage{graphicx}
\usepackage{amssymb}
\usepackage{mathtools}
\usepackage{amsmath}
\usepackage{amsthm}
\usepackage{latexsym}
\usepackage{amsfonts}
\usepackage{etoolbox}
\usepackage{multirow}
\usepackage{float}
\usepackage{amsmath,color}
\usepackage{url}
\usepackage{hyperref}
\usepackage[T1]{fontenc}

\usepackage{arydshln}
\usepackage{lipsum}

\newtheorem{theorem}{Theorem}{}
\newtheorem{lemma}{Lemma}{}
{}
\newtheorem{corollary}{Corollary}{}
{}
{}
{}

\theoremstyle{definition}
\usepackage[utf8]{inputenc}
\usepackage{authblk}

\newcommand{\tb}{\textcolor{blue}}

\title{\bf Characterizing the positive inertia index of connected signed graphs
in terms of girth}
\author{Suliman Khan$^{1}$, Sakander Hayat$^{2,}\footnote{Corresponding author's}$\,, Mohammed J.F. Alenazi$^{3}$}
\affil{$^{1}$Department of Mathematics and Physics,\\
University of Campania ``Luigi Vanvitelli'',\\
Viale Lincoln 5, Caserta, I–81100, Italy \vspace{0.2cm}\\
$^{2}$Faculty of Science,\\
Univeriti Brunei Darussalam,\\
Jln Tungku Link, Gadong BE1410,\\
Brunei Darussalam.\vspace{0.2cm}\\
$^{3}$Department of Computer Engineering,\\
College of Computer and Information Sciences (CCIS),\\
King Saud University, Riyadh 11451, Saudi Arabia\vspace{0.3cm}\\

 \small{Emails: suliman5344@gmail.com~(S.K.), ~ sakander1566@gmail.com~(S.H.)\\
 mjalenazi@ksu.edu.sa~(M.J.F.A.)}}



\date{}
\begin{document}

\maketitle

\vspace{-0.1cm}

\begin{abstract}  \noindent Let $G^\sigma=(G,\sigma)$ be a connected signed graph and 
$A(G^\sigma)$ be its adjacency matrix. The positive inertia index of $G^\sigma$, denoted 
by $p^{+}(G^\sigma)$, is defined as the number of positive eigenvalues of $A(G^\sigma)$. 
Assume that $G^\sigma$ contains at least one cycle, and let $g_{r}$ be its girth. In 
this paper, we prove $p^{+}(G^\sigma) \geq \lceil \frac {g_{r}}{2} \rceil-1$ for a signed 
graph $G^\sigma$. The extremal signed graphs corresponding to 
$p^{+}(G^\sigma) = \lceil \frac {g_{r}}{2} \rceil-1$ and $p^{+}(G^\sigma) =\lceil \frac {g_{r}}{2} \rceil$ 
are characterized, respectively. The results presented in this article extend the recent work on ordinary 
graphs by Duan and Yang (Linear Algebra Appl., 2024) to the context of signed graphs.
\end{abstract}
\vspace{0.2cm}
\thanks{{\em MSC Codes.}  05C05, 05C50}\\
\thanks{{\em Keywords}: Signed graph, eigenvalue, positive inertia index, girth}

\section{Introduction}
\indent
All graphs considered in this paper are assumed to be connected, simple, and undirected. Let \( G \) be 
a simple graph of order \( n \) with vertex set \( V(G) = \{ v_1, v_2, \dots, v_n \} \) and edge set 
\( E(G) \). The adjacency matrix \( A(G) \) of a graph \( G \) of order \( n \) is a symmetric 
\( n \times n \) matrix defined as: \( A(G)=(a_{ij}) \), where $a_{ij} =1$ if $v_i$ and $v_j$ are adjacent 
and $0$, otherwise. A signed graph \( G^\sigma = (G, \sigma) \) consists of a simple graph \( G = (V, E) \), 
referred to as its underlying graph, along with a sign function \( \sigma: E \to \{ +, - \} \). In signed graphs, 
edge labels are conventionally represented as \( \pm1 \). Accordingly, the adjacency matrix is intuitively 
constructed by assigning \( +1 \) or \( -1 \) to each edge based on its respective sign. Formally, the 
adjacency matrix of \( G^\sigma \) is denoted by \( A(G^\sigma)= a^\sigma_{ij}\) and defined as follows:

$$a^\sigma_{ij}=\left \{ \begin{array}{rcl}
\sigma(v_iv_j) & \mbox{if $v_i$ is adjacent to $v_j$}, \\
0 & \mbox{otherwise.}
\end{array} \right.$$\\

Since $A(G^\sigma)$ is a real symmetric matrix of a signed graph $G^\sigma$ and therefore it has real 
eigenvalues ordered by \( \lambda_1 \geq \lambda_2 \geq \cdots \geq \lambda_n \), which are also referred 
to as the eigenvalues of \( G^\sigma \). The number of positive, negative, and zero eigenvalues of 
\( A(G^\sigma) \) are known as the positive inertia index $p^+(G^\sigma)$, negative inertia index $n^-(G^\sigma)$, 
and nullity $\eta(G^\sigma)$ of \( G^\sigma \), respectively. Let $C^\sigma$ be a signed cycle. The sign of 
$C^\sigma$ is defined as the product of the signs of all its edges, denoted by $\sigma(C^\sigma)$. A signed 
cycle $C^\sigma$ is said to positive (resp. negative) if $\sigma(C^\sigma)=+$ (resp. $\sigma(C^\sigma)=-$). A 
signed graph $G^\sigma$ is said to be balanced if all the cycles in $G^\sigma$ are positive, or equivalently, 
if every cycle contains an even number of negative edges. Otherwise, $G^\sigma$ is called an unbalanced signed graph. 
We denote by \( P^\sigma_n \), \( C^\sigma_n \), \( K^\sigma_n \), and \( K^\sigma_{1,n} \) a signed path, a 
signed cycle, a signed complete graph, and a signed star, respectively, all of which are signed graphs on \( n \) 
vertices. The star \( K^\sigma_{1,n} \) is sometimes written as \( S^\sigma_{n+1} \). The notation 
\( K^\sigma_{n_1, n_2, \dots, n_t} \) represents a complete \( t \)-partite signed graph with part sizes 
\( n_1, n_2, \dots, n_t \). A signed graph (or simply a graph) is called trivial if it consists of a single 
vertex with no edges. A pendant vertex (or leaf) is a vertex with exactly one neighbor (or simply a vertex of 
degree 1). A pendant star is a subgraph consisting of a central vertex connected to one or more pendant vertices.\\

Suppose that a graph \( G \) contains at least one cycle. The girth of \( G \) is defined as the length of its shortest 
cycle and is denoted by \( g_r \). In \cite{Cheng2007}, Cheng and Liu showed that for an $n$-vertex graph \( G \), 
$\eta(G) \leq n-g_r+2$ if $4 \mid g_r$ and $\eta(G) \leq n-g_r$ if $4 \nmid g_r$. Connected graphs $G$ with 
$\eta(G) = n-g_r+2$ and $\eta(G) = n-g_r$ have been characterized by Zhou et al. \cite{Zhou2021}. 
Chang and Li \cite{Chang2022} characterized connected graphs $G$ with $\eta(G) = n-g_r-1$ and $\eta(G) = n-g_r+2$. 
Recently, Duan and Yang \cite{DuanYang2024} characterized positive inertia index of a graph $G$ in terms of its girth. In 
particular, they showed that $p^+(G) \geq \lceil \frac{g_r}{2}\rceil-1$. Furthermore, they 
characterized the corresponding extremal graphs with $p^+(G) = \lceil\frac{g_r}{2}\rceil-1$ and $p^+(G) = \lceil \frac{g_r}{2}\rceil$, 
respectively. Duan \cite{Duan2024} explored the relationship between the negative inertia index of a graph and its girth. 
His main result establishes a lower bound for the negative inertia index, given by 
$p^-(G) \geq \lceil \frac{g_r}{2}\rceil-1$. Some other finding about the spectral properties of a graph and its 
structural characteristics can be found in \cite{Ma2013,Oboudi2017,Oboudi2016,Torgasev1985}.\\

In recent studies, the inertia indices and nullity of signed graphs has been extensively explored. 
Yu et al. \cite{Yu2014} studied the positive inertia index of unicyclic signed graphs. Duan \cite{Duan2022} 
investigated both the negative and positive inertia indices of signed graphs in terms of their underlying graphs 
indices. Duan and Yang \cite{Duan2024} characterized signed graphs $G^\sigma$ with $n^-(G^\sigma)=1$ and triangle 
free signed graphs $G^\sigma$ with $n^-(G^\sigma)=2$, respectively. The nullity of unicyclic and bicyclic signed 
graphs has been studied by Fan et al. \cite{Fan2013} and Fan et al. \cite{Fan2014}, respectively. The nullity of signed graphs in terms of 
their girth has been extensively studied by Wu et al. \cite{Wu2022}. These results motivate us to explore bounds on the 
positive inertia index of connected signed graphs in terms of their girth. Next, we state our main results.

\begin{theorem}\label{Theorem1}
Let $G^\sigma=(G,\sigma)$ be a connected signed graph with girth $g_{r}$. Then $p^{+}(G^\sigma) \geq \lceil \frac {g_{r}}{2} \rceil-1$, 
where the equality holds if and only if:\\
(i) $G^\sigma=C^\sigma_{g_{r}}$ is a balanced signed cycle satisfying $g_{r} \equiv 0,1 \pmod{4}$ or an unbalanced signed cycle 
satisfying $g_{r} \equiv 2,3 \pmod{4}$.\\
(ii) $G^\sigma=K^\sigma_{n_1,n_2, \cdots , n_t}$ (where, $n_1,n_2, \cdots , n_t \geq 2$) is a balanced (with all positive cycles) 
complete multipartite signed graph.
\end{theorem}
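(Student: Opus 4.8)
The plan is to derive the inequality from Cauchy interlacing together with an exact count of the positive eigenvalues of a signed cycle, and then to split the equality analysis into a large-girth rigidity argument and a small-girth classification.

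A shortest cycle $C:=C^\sigma_{g_r}$ is an induced subgraph of $G^\sigma$, since a chord would split it into two strictly shorter cycles. Because deleting a vertex interlaces the eigenvalues of a real symmetric matrix, $p^{+}$ is monotone under passing to induced subgraphs, so $p^{+}(G^\sigma)\ge p^{+}(C)$ and it suffices to evaluate $p^{+}(C)$. A balanced cycle is switching equivalent to the all-positive $C_n$, with eigenvalues $2\cos(2\pi j/n)$, and an unbalanced cycle is switching equivalent to a cycle with a single negative edge, with eigenvalues $2\cos((2j+1)\pi/n)$; switching changes neither the spectrum nor the sign of any cycle. Counting the indices $0\le j\le n-1$ for which the relevant cosine is positive, case by case according to $g_r\bmod 4$, gives $p^{+}(C)\ge\lceil g_r/2\rceil-1$ and pinpoints, in each residue class, exactly which of the balanced or unbalanced cycle meets the bound. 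This proves the inequality and settles extremal case (i): if $G^\sigma$ is itself a cycle, equality holds precisely for the sign type selected by the count.

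For the equality case in general, $p^{+}(G^\sigma)=\lceil g_r/2\rceil-1$ forces $p^{+}(C)=\lceil g_r/2\rceil-1$ by monotonicity, so the shortest cycle is already of the extremal sign type. I would then argue by girth. Suppose $g_r\ge 5$ and, for contradiction, $G^\sigma\ne C$; by connectedness some $w\notin V(C)$ is adjacent to $C$. Such a $w$ cannot have two neighbours on $C$: they would cut $C$ into two arcs whose lengths sum to $g_r$, and the shorter arc together with $w$ would close a cycle of length at most $g_r/2+2<g_r$, violating the girth. Hence $w$ has a unique neighbour $u$ on $C$, and the subgraph induced on $V(C)\cup\{w\}$ is $C$ with a pendant edge $uw$. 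Applying the pendant-vertex reduction $p^{+}(\,\cdot\,)=p^{+}(\,\cdot\,-u-w)+1$, with $C-u=P_{g_r-1}$ and $p^{+}(P_{g_r-1})=\lfloor(g_r-1)/2\rfloor$, this induced subgraph has $p^{+}=\lfloor(g_r-1)/2\rfloor+1=\lceil g_r/2\rceil$, one more than the bound. Monotonicity then gives $p^{+}(G^\sigma)\ge\lceil g_r/2\rceil$, a contradiction. Therefore $G^\sigma=C$ when $g_r\ge 5$, which is case (i). When $g_r\in\{3,4\}$ the bound equals $1$, so the equality case coincides with the connected signed graphs of girth $3$ or $4$ with $p^{+}(G^\sigma)=1$. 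The decisive ingredient is the signed analogue of the classical characterization of connected graphs with a single positive eigenvalue as complete multipartite graphs: $p^{+}(G^\sigma)=1$ if and only if $G^\sigma$ is switching equivalent to an all-positive complete multipartite graph, i.e.\ a balanced complete multipartite signed graph. Granting this, triangle-freeness (girth $4$) forces exactly two parts, each of size $\ge 2$ to admit a $4$-cycle, while a triangle (girth $3$) forces at least three parts; intersecting the multipartite output with the girth constraint delivers case (ii).

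The main obstacle is this $p^{+}=1$ classification in the signed category. In contrast with the unsigned case it must force \emph{balance} as well as the multipartite shape, and it cannot be obtained by interlacing against a single forbidden induced subgraph. I would attack it by writing $A(G^\sigma)$ as a rank-one positive part plus a negative semidefinite part and combining switching with the non-adjacent-twin reduction (deleting one of two non-adjacent vertices with identical signed neighbourhoods leaves $p^{+}$ unchanged) to collapse $G^\sigma$ onto a balanced complete graph. Carrying out this collapse rigorously, and tracking the admissible part sizes so that the girth is exactly $3$ or $4$, is where the real effort concentrates; the interlacing bound and the $g_r\ge 5$ rigidity argument above are comparatively routine.
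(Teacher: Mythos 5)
Your proposal is correct and follows essentially the same route as the paper: interlacing plus the signed-cycle spectra for the lower bound, the pendant-vertex reduction $p^{+}(G^{\sigma})=p^{+}(G^{\sigma}-x-y)+1$ combined with a short-cycle/girth argument to force $G^{\sigma}=C^{\sigma}_{g_r}$ unless $g_r\in\{3,4\}$, and the classification of connected signed graphs with $p^{+}=1$ for the remaining case. The one ingredient you flag as the ``main obstacle'' --- that $p^{+}(G^{\sigma})=1$ if and only if $G^{\sigma}$ is a balanced complete multipartite signed graph --- is exactly the paper's Lemma~\ref{Lemma4}, quoted from Yu, Feng and Qu (2016), so no new proof of it is needed.
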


By excluding the two cases where equality holds, we can establish tighter bounds for the positive inertia index of signed graphs, 
as presented in the following theorem.
\begin{theorem}\label{Theorem2}
Let $G^\sigma=(G,\sigma)$ be a connected signed graph with girth $g_{r}$. If $G^\sigma$ is neither a balanced cycle 
$C^\sigma_{g_{r}}$ with $g_{r} \equiv 0,1\pmod{4}$ or an unbalanced cycle with $g_{r} \equiv 2,3\pmod{4}$ nor a complete 
multipartite balanced signed graph, then $p^{+}(G^\sigma) \geq \lceil \frac {g_{r}}{2} \rceil$.
\end{theorem}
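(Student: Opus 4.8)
The plan is to obtain Theorem \ref{Theorem2} as an immediate corollary of the equality characterization already packaged into Theorem \ref{Theorem1}. By Theorem \ref{Theorem1} one has $p^{+}(G^\sigma) \geq \lceil \frac{g_r}{2} \rceil - 1$ for every connected signed graph containing a cycle, and equality holds \emph{precisely} when $G^\sigma$ is one of the two listed families: a balanced cycle $C^\sigma_{g_r}$ with $g_r \equiv 0,1 \pmod 4$ or an unbalanced cycle with $g_r \equiv 2,3 \pmod 4$ (case (i)), or a balanced complete multipartite signed graph $K^\sigma_{n_1,\dots,n_t}$ with all $n_i \geq 2$ (case (ii)). The hypothesis of Theorem \ref{Theorem2} is, word for word, the negation of ``(i) or (ii)'', so the equality case is ruled out and the inequality of Theorem \ref{Theorem1} must be strict, i.e. $p^{+}(G^\sigma) > \lceil \frac{g_r}{2} \rceil - 1$.

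First I would make the logical passage from ``equality fails'' to ``the bound improves by one'' fully explicit, since that is the only step carrying any content. The quantity $\lceil \frac{g_r}{2} \rceil - 1$ is an integer for every integer $g_r \geq 3$, and $p^{+}(G^\sigma)$ is by definition a non-negative integer; hence there is no admissible value strictly between $\lceil \frac{g_r}{2} \rceil - 1$ and $\lceil \frac{g_r}{2} \rceil$, and the strict inequality forces $p^{+}(G^\sigma) \geq \lceil \frac{g_r}{2} \rceil$. No fresh spectral computation is required: all of the analytic work has already been absorbed into Theorem \ref{Theorem1}, and Theorem \ref{Theorem2} is simply its contrapositive restatement combined with integrality.

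Should one instead want a proof that does not invoke the full ``only if'' direction of Theorem \ref{Theorem1}, the natural route is interlacing. A shortest cycle is always induced, so Cauchy interlacing gives $p^{+}(G^\sigma) \geq p^{+}(C^\sigma_{g_r})$, and the explicit cycle spectrum shows $p^{+}(C^\sigma_{g_r}) = \lceil \frac{g_r}{2} \rceil$ in the two non-extremal sign classes, disposing of the case $G^\sigma = C^\sigma_{g_r}$. When $G^\sigma$ strictly contains its girth cycle and $g_r > 3$, any external vertex adjacent to the cycle meets it in exactly one vertex (a second adjacency would create a shorter cycle), so $C^\sigma_{g_r}$ together with a pendant vertex appears as an induced subgraph; the pendant-vertex reduction changes $p^{+}$ by exactly one and leaves the signed path $P^\sigma_{g_r-1}$ (switching-equivalent to the all-positive path), yielding $p^{+}(G^\sigma) \geq \lfloor \frac{g_r-1}{2} \rfloor + 1 = \lceil \frac{g_r}{2} \rceil$. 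The hard part in this self-contained version is the residual small-girth regime $g_r \in \{3,4\}$, where $G^\sigma$ may carry many triangles or quadrilaterals and need not contain a cycle-plus-pendant as an induced subgraph; this is exactly the complete multipartite regime, and one must argue separately that any such graph which is either unbalanced or has a singleton part already satisfies $p^{+}(G^\sigma) \geq \lceil \frac{g_r}{2} \rceil$. Because appealing to Theorem \ref{Theorem1} bypasses this case analysis completely, I would present the one-line corollary argument as the proof and relegate the interlacing route to a remark.
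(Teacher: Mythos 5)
Your proposal is correct and follows exactly the paper's own argument: the paper likewise derives Theorem \ref{Theorem2} directly from Theorem \ref{Theorem1} (together with Lemmas \ref{Lemma01} and \ref{Lemma3}) by excluding the two equality cases, with integrality of $p^{+}$ upgrading the strict inequality to $p^{+}(G^\sigma)\geq\lceil\frac{g_r}{2}\rceil$. Your explicit integrality remark and the sketched interlacing alternative are extra detail the paper omits, but the core route is the same.
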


A signed graph $G^\sigma$ is called a \textit{canonical unicyclic graph} if it contains a single cycle and a pendant stars 
attached to none, some, or all of its vertices. In Figure \ref{canonical}, we illustrate a canonical unicyclic graph with 
girth $6$ and $p^+(G^\sigma) = \lceil \frac{6}{2} \rceil = 3$.

\begin{figure}[htbp!]
     \centering
  \includegraphics[width=5cm]{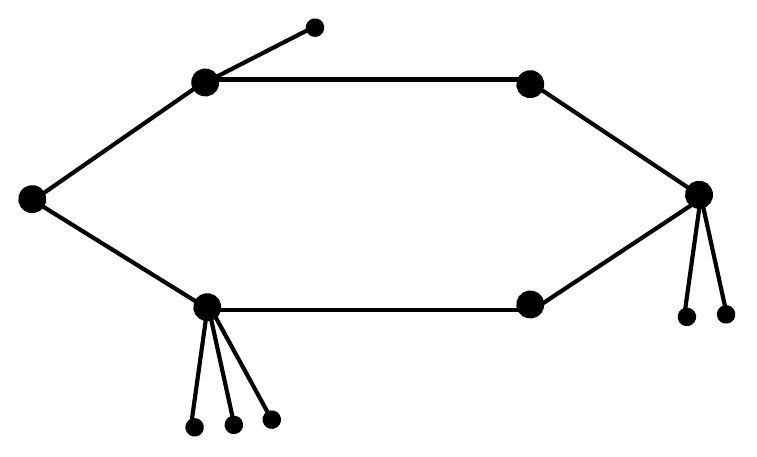}\\
  \caption{A canonical unicyclic graph with $p^+(G^\sigma) = \lceil \frac{6}{2} \rceil = 3$ having positive 
  eigenvalues $1.41421,~1.73205$, and $2.44949$.}\label{canonical}
\end{figure}

We define $B^\sigma(k,l,m)$ as a bicyclic signed graph obtained from connecting two distinct vertices by 
three internally vertex-disjoint paths, namely $P^\sigma_k$, $P^\sigma_l$, and $P^\sigma_m$. Clearly, any 
permutation of the parameters $k, l,$ and $m$ results in an isomorphic bicyclic signed graph. In the 
subsequent theorem, we characterize the extremal signed graphs corresponding to 
$p^+(G^\sigma) = \lceil \frac {g_{r}}{2} \rceil$.

\begin{theorem}\label{Theorem3}
Let $G^\sigma=(G,\sigma)$ be a connected signed graph with girth $g_{r}$. Then $p^{+}(G^\sigma) = \lceil \frac {g_{r}}{2} \rceil$ 
if and only if $G^\sigma$ is one of the following signed graph:\\
(i) $G^\sigma=C^\sigma_{g_{r}}$ is a balanced signed cycle satisfying $g_{r} \equiv 2,3\pmod{4}$ or an 
unbalanced signed cycle satisfying $g_{r} \equiv 0,1\pmod{4}$.\\
(ii) $G^\sigma$ is a canonical unicyclic signed graph where either exactly one vertex is attached to a pendant star, 
or multiple vertices are attached to a pendant star such that all paths having odd number of vertices between any two 
consecutive pendant stars when $g_r=1,3 \pmod4$.\\
(iii) $G^\sigma$ is a canonical unicyclic signed graph where either exactly one vertex is attached to a pendant star, 
or multiple vertices are each attached to a pendant star such that there is exactly one path with an even number of 
vertices (including zero) between any two consecutive pendant stars when $g_r=0,2 \pmod4$;\\
(iv) A signed graph $G^\sigma$ obtained from a balanced signed cycle $C^\sigma_{g_r}$ with $g_r \equiv 0,1 \pmod 4$ 
and signed star $S^\sigma_{t+1}$ by joining a vertex of $C^\sigma_{g_r}$ to the center of 
$S^\sigma_{t+1}$, where $t \geq 1$.\\
(v) A signed graph $G^\sigma$ obtained from an unbalanced signed cycle $C^\sigma_{g_r}$ with $g_r \equiv 2,3 \pmod 4$ 
and signed star $S^\sigma_{t+1}$ by joining a vertex of $C^\sigma_{g_r}$ to the center of 
$S^\sigma_{t+1}$, where $t \geq 1$.\\
(vi) Bicyclic signed graphs \( B^\sigma(5, 4, 5) \) and \( B^\sigma(5, 5, 5) \) with balanced cycles or a 
signed bicyclic graph \( B^\sigma(5, 3, 5) \) whose both cycles are negative, as depicted in Figure \ref{Bicyclic}.
\end{theorem}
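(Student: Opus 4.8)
The plan is to prove both directions with the same engine: a pendant-vertex reduction together with Cauchy interlacing and the explicitly known positive inertia indices of signed cycles and signed paths. The reduction I will rely on is the standard fact that if $v$ is a pendant vertex of $G^\sigma$ with neighbour $u$, then $p^+(G^\sigma)=p^+(G^\sigma-u-v)+1$ and $n^-(G^\sigma)=n^-(G^\sigma-u-v)+1$; deleting $u$ also turns any remaining pendant neighbours of $u$ into isolated vertices, which contribute only to the nullity. Since every forest satisfies $p^+=n^-=\mu$ (its matching number) and a signed path $P^\sigma_m$ has $p^+(P^\sigma_m)=\lfloor m/2\rfloor$ independently of its signature (trees being balanced), this computes $p^+$ of any canonical unicyclic graph combinatorially. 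Both the sufficiency and the necessity arguments will be organised according to whether the induced girth cycle is of the $p^+=\lceil g_r/2\rceil-1$ type or of the $p^+=\lceil g_r/2\rceil$ type identified in Theorems~\ref{Theorem1} and~\ref{Theorem2}.

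For the sufficiency direction I would treat each family (i)--(vi) in turn. Case (i) is immediate from the spectrum of a signed cycle, whose eigenvalues are $2\cos(2\pi k/g_r)$ in the balanced case and $2\cos((2k+1)\pi/g_r)$ in the unbalanced case; counting the positive ones yields exactly $\lceil g_r/2\rceil$ under the stated parity/balance conditions. For the canonical unicyclic families (ii) and (iii), let $s$ be the number of cycle vertices carrying a pendant star and let $m_1,\dots,m_s$ be the numbers of vertices on the $s$ arcs between consecutive starred vertices, so $\sum_i m_i=g_r-s$. Stripping the $s$ stars one at a time via the reduction contributes $s$ to $p^+$ and leaves the disjoint paths $P^\sigma_{m_1},\dots,P^\sigma_{m_s}$ together with isolated vertices, whence
\begin{equation*}
p^+(G^\sigma)=s+\sum_{i=1}^{s}\Big\lfloor \tfrac{m_i}{2}\Big\rfloor=\tfrac{1}{2}\big(g_r+s-o\big),
\end{equation*}
where $o$ is the number of odd arcs. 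Setting this equal to $\lceil g_r/2\rceil$ yields exactly the arc-parity conditions recorded in (ii) and (iii) (the even and odd residues of $g_r$ being distinguished by whether $\lceil g_r/2\rceil=g_r/2$ or $(g_r+1)/2$). Families (iv) and (v) are handled the same way, except that the star is attached through an extra vertex off the cycle, so stripping it leaves the whole cycle $C^\sigma_{g_r}$ intact; here the single star adds $1$ to the $\lceil g_r/2\rceil-1$ coming from the minimal-type cycle. Finally, (vi) consists of three explicit bicyclic graphs, for which I would exhibit a congruence (or compute the spectrum directly) to certify $p^+=\lceil g_r/2\rceil$.

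For the necessity direction, suppose $p^+(G^\sigma)=\lceil g_r/2\rceil$. By Theorem~\ref{Theorem2} the graph is not one of the two extremal families of Theorem~\ref{Theorem1}, and the girth cycle, being induced, forces $p^+(G^\sigma)\ge p^+(C^\sigma_{g_r})$ by interlacing. I would first argue that every pendant tree hanging off the unicyclic or bicyclic core must be a single pendant star: any pendant path of length $\ge 2$, or any branching pendant tree, strictly raises $p^+$ under reduction beyond $\lceil g_r/2\rceil$. Having reduced to a core that is either the cycle with pendant stars or a graph with a second independent cycle, I then run the combinatorial identity above in reverse: the equation $\tfrac12(g_r+s-o)=\lceil g_r/2\rceil$ pins down the admissible arc parities and the number of starred vertices, giving families (ii)--(v). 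The bicyclic possibilities are isolated last by observing that a second independent cycle generically contributes at least one further positive eigenvalue; a short self-contained check over the small admissible parameter values then leaves only $B^\sigma(5,4,5)$, $B^\sigma(5,5,5)$, and the doubly-negative $B^\sigma(5,3,5)$ of (vi), with all other bicyclic graphs (and all graphs with three or more independent cycles) exceeding the bound.

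The main obstacle is the necessity direction, and specifically the exhaustive control of the extra structure. Two points require genuine care: first, showing that no pendant configuration other than a star, and no arc-parity pattern other than the prescribed ones, can keep $p^+$ at its minimal value, which amounts to proving that each forbidden feature contributes a strictly positive eigenvalue that the interlacing lower bound cannot absorb; and second, the finite but delicate classification of the bicyclic extremal graphs, where the interplay between the two cycle signs, their lengths modulo $4$, and the length of the shared path must be tracked simultaneously. I expect the bicyclic case (vi) to concentrate most of the technical effort, since there the clean unicyclic reduction no longer terminates in a forest and one must instead fall back on direct congruence or eigenvalue computations for a handful of base graphs.
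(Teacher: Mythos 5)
Your overall engine --- the pendant-vertex reduction $p^+(G^\sigma)=p^+(G^\sigma-u-v)+1$, interlacing, and the closed-form values $p^+(P^\sigma_m)=\lfloor m/2\rfloor$ and $p^+(C^\sigma_n)$ from Lemmas~\ref{Lemma001}, \ref{Lemma01}, \ref{Lemma3} and \ref{Lemma3a} --- is exactly the paper's engine. Your treatment of the canonical unicyclic case is the same computation as Lemma~\ref{Lemma6}: stripping the $s$ stars gives $p^+=s+\sum_i\lfloor m_i/2\rfloor=\tfrac12(g_r+s-o)$, and equating with $\lceil g_r/2\rceil$ forces $o=s$ when $g_r$ is even and $o=s-1$ when $g_r$ is odd. (Note these are the parities derived in the paper's proof of Lemma~\ref{Lemma6}; they are the opposite of what items (ii) and (iii) of the theorem statement literally say, which appear to have the two parity conditions swapped --- a simple counting check shows that for odd $g_r$ one cannot have all arcs odd, so your derivation, like the paper's internal computation, is the correct one.) Your sufficiency arguments for (i), (iv), (v), (vi) likewise coincide with Lemma~\ref{Lemma7}.

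The genuine gap is the necessity direction for graphs that are not canonical unicyclic, and here your proposal is a program rather than a proof. You assert that any pendant configuration other than a single star ``strictly raises $p^+$,'' but the only tool available (Lemma~\ref{Lemma5}) applies when $p^+(C^\sigma)=p^+(G^\sigma)$, whereas in the present regime one has $p^+(G^\sigma)=p^+(C^\sigma)+1$ when the girth cycle is of minimal type, so exactly one extra ``level'' of structure can be absorbed; deciding which single depth-two attachment survives (the star of (iv)--(v)) versus which combinations of depth-one stars survive (the canonical families) requires a case analysis you have not carried out, and the interlacing bound alone does not rule out, e.g., an $N_1$-vertex attached to one vertex of a maximal-type cycle, since $\lfloor(g_r-1)/2\rfloor+1$ can equal $\lceil g_r/2\rceil$. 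Similarly, the ``short self-contained check'' that isolates $B^\sigma(5,4,5)$, $B^\sigma(5,5,5)$ and the doubly-negative $B^\sigma(5,3,5)$ is precisely the delicate part: for signed graphs one must track both cycle signs and the three path lengths simultaneously, and none of your stated lemmas does this. To be fair, the paper itself does not close this gap either --- it defers the entire necessity of Lemma~\ref{Lemma7} to the unsigned result of Duan and Yang, without justifying that the argument transfers across switching classes --- so your proposal reproduces the paper's proof where the paper actually gives one, and leaves open exactly the step the paper also outsources.
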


The rest of the article is structured as follows:
In Section \ref{sec2}, we introduce preliminary lemmas, notations and fundamental results on the positive 
inertia index of a signed graph. Section \ref{sec3} is devoted to proving the main result, i.e., 
$p^{+}(G^\sigma) \geq \lceil \frac {g_{r}}{2} \rceil-1$, and characterizing the extremal signed graphs that 
satisfy $p^{+}(G^\sigma) = \lceil \frac {g_{r}}{2} \rceil-1$ and $p^{+}(G^\sigma) =\lceil \frac {g_{r}}{2} \rceil$. 
This section also completes the proofs of Theorems \ref{Theorem1}, \ref{Theorem2}, and \ref{Theorem3}.

\section{Preliminaries}\label{sec2}

This section presents some preliminary results required in subsequent sections.\\

The following is the so-called Cauchy Interlacing Theorem.
\begin{theorem}(\cite{Cvetkovic1980}): (Interlacing Theorem) Let \( P \) be a real \( n \times m \) 
matrix satisfying \( P^T P = I_m \), where \( m < n \). Suppose \( A \) is an \( n \times n \) real 
symmetric matrix with eigenvalues \( \lambda_1 \geq \lambda_2 \geq \dots \geq \lambda_n \). 
Define \( H = P^T A P \), which is an \( m \times m \) symmetric matrix with eigenvalues 
\( \nu_1 \geq \nu_2 \geq \dots \geq \nu_m \). Then, the eigenvalues of \( H \) interlace those of \( A \), 
meaning that
\[
\lambda_{n-m+j} \leq \nu_j \leq \lambda_j, \quad \text{for } j = 1, 2, \dots, m.
\]
\end{theorem}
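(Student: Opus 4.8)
The plan is to derive both inequalities from the Courant--Fischer (min--max) variational characterization of eigenvalues, exploiting the fact that the hypothesis $P^T P = I_m$ makes $P$ a linear isometry. First I would record the key identity: for every $y \in \mathbb{R}^m$ we have $y^T H y = y^T P^T A P y = (Py)^T A (Py)$ and $\|Py\|^2 = y^T P^T P y = y^T y = \|y\|^2$. Thus the Rayleigh quotient of $H$ at any $y \neq 0$ equals the Rayleigh quotient of $A$ at $Py$. Moreover, since the columns of $P$ are orthonormal, $P$ is injective, so it carries every $j$-dimensional subspace $T \subseteq \mathbb{R}^m$ to a $j$-dimensional subspace $PT \subseteq \mathbb{R}^n$ (note $j \le m < n$, so $PT$ is an admissible competitor below).

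Next I would invoke Courant--Fischer in its max--min form, namely $\lambda_j = \max_{\dim S = j}\ \min_{0 \neq x \in S} \frac{x^T A x}{x^T x}$ and likewise $\nu_j = \max_{\dim T = j}\ \min_{0 \neq y \in T} \frac{y^T H y}{y^T y}$. Choosing $T$ to be the optimal $j$-dimensional subspace for $\nu_j$ and transporting it through $P$, the identity above yields $\nu_j = \min_{0 \neq y \in T} \frac{y^T H y}{y^T y} = \min_{0 \neq x \in PT} \frac{x^T A x}{x^T x}$. Since $PT$ is a valid $j$-dimensional subspace competing in the maximization that defines $\lambda_j$, this gives $\nu_j \le \lambda_j$ for each $j = 1, \dots, m$.

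For the lower bound $\lambda_{n-m+j} \le \nu_j$, the cleanest route is to apply the inequality just proved to the pair $(-A, -H)$, noting that $-H = P^T(-A)P$ and $P^T P = I_m$ still holds. The $k$-th largest eigenvalue of $-A$ is $-\lambda_{n-k+1}$ and the $k$-th largest eigenvalue of $-H$ is $-\nu_{m-k+1}$, so the upper bound applied to this pair reads $-\nu_{m-k+1} \le -\lambda_{n-k+1}$, i.e. $\nu_{m-k+1} \ge \lambda_{n-k+1}$. Setting $k = m-j+1$ produces exactly $\nu_j \ge \lambda_{n-m+j}$. Alternatively, one can rerun the subspace argument using the dual min--max form $\lambda_j = \min_{\dim S = n-j+1}\ \max_{0 \neq x \in S} \frac{x^T A x}{x^T x}$, transporting an optimal $(m-j+1)$-dimensional subspace for $\nu_j$ through $P$.

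Because this is a classical statement (the Poincar\'e separation theorem), the argument is essentially routine; the only points needing care are verifying that $P$ simultaneously preserves norms and the dimension of subspaces---which is precisely where the normalization $P^T P = I_m$ is used---and keeping the index bookkeeping consistent when passing from $A$ to $-A$ in the lower bound.
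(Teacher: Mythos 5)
The paper does not prove this statement at all; it is quoted as a known result with a citation to Cvetkovi\'c, Doob and Sachs, so there is no ``paper proof'' to compare against. Your argument is the standard and correct proof of the Poincar\'e separation theorem. The two pillars are both sound: (1) the identity $y^T H y = (Py)^T A(Py)$ together with $\|Py\|=\|y\|$ shows that $P$ transports Rayleigh quotients and, being injective, carries $j$-dimensional subspaces of $\mathbb{R}^m$ to $j$-dimensional subspaces of $\mathbb{R}^n$; feeding the optimal subspace for $\nu_j$ into the max--min characterization of $\lambda_j$ gives $\nu_j\le\lambda_j$. (2) The reduction of the lower bound to the upper bound via $(-A,-H)$ is clean, and your index bookkeeping checks out: the $k$-th largest eigenvalue of $-A$ is $-\lambda_{n-k+1}$, that of $-H$ is $-\nu_{m-k+1}$, and substituting $k=m-j+1$ turns $\nu_{m-k+1}\ge\lambda_{n-k+1}$ into $\nu_j\ge\lambda_{n-m+j}$. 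The only implicit step worth making explicit is that the maximum in the max--min formula for $\nu_j$ is actually attained (take $T$ to be the span of the top $j$ eigenvectors of $H$), so that ``the optimal $j$-dimensional subspace'' exists; with that remark the proof is complete and self-contained, which is more than the paper provides.
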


The following lemma is a direct consequence of the Interlacing Theorem.
\begin{lemma}\label{Lemma01}
 Let $H^\sigma$ be an induced subgraph of a signed graph $G^\sigma$, then $p^{+}(H^\sigma) \leq p^{+}(G^\sigma)$.
\end{lemma}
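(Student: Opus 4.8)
The plan is to exhibit the adjacency matrix of the induced subgraph as a compression $P^{T} A(G^\sigma) P$ of exactly the form required by the Interlacing Theorem, and then read off the desired inequality from the one-sided interlacing bound $\nu_j \le \lambda_j$.

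First I would fix notation: suppose $G^\sigma$ has vertex set $\{v_1,\dots,v_n\}$ with adjacency matrix $A(G^\sigma)$ having eigenvalues $\lambda_1 \ge \cdots \ge \lambda_n$, and let $H^\sigma$ be the subgraph induced on a vertex subset $S = \{v_{i_1},\dots,v_{i_m}\}$. Because the sign of an edge is inherited unchanged when passing to an induced subgraph, the adjacency matrix $A(H^\sigma)$ is precisely the principal submatrix of $A(G^\sigma)$ obtained by retaining the rows and columns indexed by $S$. The routine but essential observation is that this principal submatrix can be written as a compression: letting $P$ be the $n \times m$ matrix whose $j$-th column is the standard basis vector $e_{i_j} \in \mathbb{R}^n$, one checks immediately that $P^{T} P = I_m$ (the columns are distinct standard basis vectors, hence orthonormal) and that $P^{T} A(G^\sigma) P = A(H^\sigma)$.

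With this identification, the Interlacing Theorem applies directly, taking $A = A(G^\sigma)$ and $H = A(H^\sigma)$, whose eigenvalues I denote $\nu_1 \ge \cdots \ge \nu_m$, provided $m < n$; the case $m = n$ is trivial since then $H^\sigma = G^\sigma$. The theorem yields in particular the upper interlacing bound $\nu_j \le \lambda_j$ for $j = 1,\dots,m$. Now set $k = p^{+}(H^\sigma)$, so that $\nu_1,\dots,\nu_k > 0$. For each $j \le k$ we then have $\lambda_j \ge \nu_j > 0$, so $A(G^\sigma)$ has at least $k$ positive eigenvalues; that is, $p^{+}(G^\sigma) \ge k = p^{+}(H^\sigma)$, as desired.

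There is no genuine analytic obstacle here—the statement is, as advertised, a direct consequence of interlacing. The only points requiring care are bookkeeping rather than mathematics: verifying that signs carry over so that $A(H^\sigma)$ really is a principal submatrix of $A(G^\sigma)$ (and not merely the adjacency matrix of some re-signed graph), and confirming the compression identity $P^{T} A(G^\sigma) P = A(H^\sigma)$ together with $P^{T} P = I_m$, so that the hypotheses of the Interlacing Theorem are met verbatim.
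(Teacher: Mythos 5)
Your proof is correct and follows exactly the route the paper intends: the paper presents this lemma as a direct consequence of the Interlacing Theorem without writing out the details, and your argument (realizing $A(H^\sigma)$ as the compression $P^{T}A(G^\sigma)P$ with $P$ built from standard basis vectors, then using $\nu_j \leq \lambda_j$ to count positive eigenvalues) is precisely the standard verification being alluded to.
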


The following lemma is derived from Theorem 1.1 part (b) by Gregory et al. \cite{Gregory2003}.
\begin{lemma}\label{Lemma001}
Let $G^{\sigma}$ be a signed graph and $x\in V(G^\sigma)$ be any pendant vertex in $G^\sigma$. 
Let $y \in V(G^\sigma)$ be a unique neighbor of $x$ in $G^\sigma$, then 
$p^+(G^{\sigma}) = p^+(G^{\sigma} - x - y) + 1$.
\end{lemma}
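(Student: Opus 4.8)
The plan is to exploit the pendant structure to put $A(G^\sigma)$ into a convenient block form and then apply Sylvester's law of inertia through a Schur complement. First I would order the vertices of $G^\sigma$ so that the pendant vertex $x$ comes first and its unique neighbor $y$ comes second. Since $x$ is adjacent only to $y$ and signed graphs carry no loops, the adjacency matrix takes the block form
\[
A(G^\sigma)=\begin{pmatrix} M & B \\ B^{T} & A' \end{pmatrix},
\qquad
M=\begin{pmatrix} 0 & s \\ s & 0 \end{pmatrix},
\qquad
B=\begin{pmatrix} \mathbf{0}^{T} \\ \mathbf{b}^{T} \end{pmatrix},
\]
where $s=\sigma(xy)\in\{+1,-1\}$, $A'=A(G^\sigma-x-y)$, and $\mathbf{b}$ records the adjacencies of $y$ with the remaining vertices. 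The decisive structural feature is that the first row of $B$, the one indexed by $x$, vanishes identically, because $x$ has no neighbor other than $y$.

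Next I would note that $M$ is nonsingular: $\det M=-s^{2}=-1$, its eigenvalues are $\pm 1$, and hence $p^{+}(M)=1$. Because the leading block $M$ is invertible, block Gaussian elimination furnishes a congruence
\[
\begin{pmatrix} I & 0 \\ -B^{T}M^{-1} & I \end{pmatrix}
\begin{pmatrix} M & B \\ B^{T} & A' \end{pmatrix}
\begin{pmatrix} I & -M^{-1}B \\ 0 & I \end{pmatrix}
=\begin{pmatrix} M & 0 \\ 0 & A'-B^{T}M^{-1}B \end{pmatrix}.
\]
By Sylvester's law of inertia the two sides share the same inertia, so in particular
\[
p^{+}\bigl(A(G^\sigma)\bigr)=p^{+}(M)+p^{+}\bigl(A'-B^{T}M^{-1}B\bigr)=1+p^{+}\bigl(A'-B^{T}M^{-1}B\bigr).
\]

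The heart of the argument, and the step I expect to be the main (though still short) obstacle, is showing that the Schur complement collapses to $A'$ itself, i.e.\ that $B^{T}M^{-1}B=0$. Here one uses both that $s=\pm 1$, which gives $M^{-1}=M$, and, crucially, that the $x$-row of $B$ is zero. A direct computation gives $M^{-1}B=\bigl(\begin{smallmatrix} s\,\mathbf{b}^{T}\\ \mathbf{0}^{T}\end{smallmatrix}\bigr)$; since the first column of $B^{T}$ is zero while the second row of $M^{-1}B$ is zero, their product $B^{T}M^{-1}B$ is the zero matrix. Hence $A'-B^{T}M^{-1}B=A'=A(G^\sigma-x-y)$, and combining this with the displayed identity yields
\[
p^{+}(G^\sigma)=1+p^{+}(G^\sigma-x-y),
\]
as claimed. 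I would close by remarking that this is precisely the signed-graph instance of Theorem~1.1(b) of Gregory et al.\ \cite{Gregory2003}; the self-contained congruence argument above makes the reliance on the pendant structure transparent.
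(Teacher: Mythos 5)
Your proof is correct. Note, however, that the paper does not actually prove this lemma: it only remarks that the statement ``is derived from Theorem 1.1 part (b) by Gregory et al.'' and gives no argument, so there is no in-paper proof to match yours against. Your congruence/Schur-complement computation is the standard self-contained way to establish such pendant-vertex reductions, and every step checks out: the block $M=\left(\begin{smallmatrix}0&s\\ s&0\end{smallmatrix}\right)$ is invertible with inertia $(1,1,0)$ and satisfies $M^{-1}=M$; the block elimination is a genuine congruence, so Sylvester's law gives the additivity of the positive inertia; and $B^{T}M^{-1}B=0$ follows exactly as you say, since the $x$-row of $B$ vanishes while $M^{-1}$ swaps the two rows, so each of the two rank-one summands of $B^{T}M^{-1}B$ contains a zero factor. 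What your argument buys over the paper's bare citation is that it makes the extension to signed graphs explicit and transparent: the sign $s=\pm1$ of the pendant edge plays no role beyond guaranteeing that $M$ has one positive and one negative eigenvalue, which is precisely why the unsigned result of Gregory et al.\ carries over verbatim. The only cosmetic remark is the degenerate case $|V(G^{\sigma})|=2$, where the block $A'$ is empty and the identity reads $p^{+}(G^{\sigma})=1+0$, which holds trivially.
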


Next, we present a result by Schwenk \& Wilson \cite{Schwenk1978}.
\begin{lemma}\label{Lemma2}
Let $C^\sigma_n$ be a signed balanced cycle of order $n$. Then it has eigenvalues $2 \cos \frac{2\pi j}{n}$, where, $j=0,1,2, \cdots , n-1$.
\end{lemma}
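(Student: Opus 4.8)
The plan is to reduce the claim to the well-known spectrum of the ordinary (unsigned) cycle $C_n$ by exploiting the fact that balancedness is a switching-invariant property. Recall that a signed graph is balanced precisely when it is switching equivalent to the signed graph all of whose edges are positive. Thus I would first argue that the balanced signed cycle $C^\sigma_n$ is switching equivalent to the all-positive cycle $C_n$: since $C^\sigma_n$ is balanced, one may choose a vertex subset $U \subseteq V(C^\sigma_n)$ and switch (that is, negate every edge joining $U$ to $V \setminus U$) so as to render all edge signs positive. This switching is realized by a diagonal signature matrix $D = \operatorname{diag}(d_1,\dots,d_n)$ with each $d_i \in \{+1,-1\}$, and one then has $A(C_n) = D\,A(C^\sigma_n)\,D$. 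Establishing that such a $D$ exists for every balanced cycle is the one step that genuinely requires care, and I regard it as the main (mild) obstacle; everything afterwards is routine.

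The second step is to observe that this operation preserves the spectrum. Since $D$ has entries $\pm 1$, it satisfies $D^2 = I_n$, so that $D = D^{-1} = D^{T}$ and $D$ is orthogonal. Consequently $A(C^\sigma_n) = D\,A(C_n)\,D^{-1}$ is an orthogonal similarity, whence $A(C^\sigma_n)$ and $A(C_n)$ share the same characteristic polynomial and therefore the same multiset of eigenvalues; in particular the quantities $p^{+}$, $n^{-}$, and $\eta$ coincide for the two graphs. This reduces the problem to computing the eigenvalues of the unsigned cycle $C_n$.

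The final step is the direct computation of the spectrum of $C_n$. Its adjacency matrix is the circulant matrix whose first row is $(0,1,0,\dots,0,1)$, generated by the cyclic shift operator. Writing $\omega = e^{2\pi i / n}$, I would verify that for each $j \in \{0,1,\dots,n-1\}$ the vector $v_j = (1,\omega^{j},\omega^{2j},\dots,\omega^{(n-1)j})^{T}$ is an eigenvector: since $\bigl(A(C_n)\,v_j\bigr)_k = \omega^{j(k-1)} + \omega^{j(k+1)} = (\omega^{j}+\omega^{-j})\,\omega^{jk}$ with indices read modulo $n$, we obtain $A(C_n)\,v_j = (\omega^{j}+\omega^{-j})\,v_j$, and $\omega^{j}+\omega^{-j} = 2\cos\frac{2\pi j}{n}$. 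As the vectors $v_0,\dots,v_{n-1}$ are linearly independent (they are the columns of the Fourier matrix), these $n$ numbers exhaust the spectrum, which establishes that $C^\sigma_n$ has eigenvalues $2\cos\frac{2\pi j}{n}$ for $j = 0,1,\dots,n-1$ and completes the proof.
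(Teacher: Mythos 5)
Your argument is correct, but note that the paper does not prove this lemma at all: it simply imports it as a known result of Schwenk and Wilson, so any proof you give is by definition a different route. Your route --- switch the balanced cycle to the all-positive cycle, observe that switching is an orthogonal similarity $A(C^\sigma_n) = D\,A(C_n)\,D^{-1}$ with $D$ a $\pm 1$ diagonal matrix, and then diagonalize the circulant $A(C_n)$ with the Fourier eigenvectors --- is the standard and fully rigorous way to establish the statement, and your circulant computation is carried out correctly. The one step you flag as ``the main (mild) obstacle,'' namely the existence of the switching matrix $D$, is in fact immediate for a cycle and you should simply write it out rather than defer it: label the vertices $v_1,\dots,v_n$ around the cycle, set $d_1 = 1$ and $d_{i+1} = d_i\,\sigma(v_i v_{i+1})$ for $i=1,\dots,n-1$; then every edge $v_i v_{i+1}$ receives sign $d_i\,\sigma(v_i v_{i+1})\,d_{i+1} = +1$ after switching, and the closing edge receives sign $d_n\,\sigma(v_n v_1)\,d_1 = \prod_{e \in E(C^\sigma_n)}\sigma(e) = \sigma(C^\sigma_n) = +1$ precisely because the cycle is balanced. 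With that two-line construction inserted, your proof is complete and self-contained, which is arguably an improvement over the paper's bare citation; the only thing the citation buys is brevity.
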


Fan et al. \cite{Fan2013} showed the following lemma.
\begin{lemma}\label{Lemma1}
Let $C^\sigma_n$ be a signed unbalanced cycle of order $n$. Then, it has eigenvalues 
$2 \cos \frac{(2j-1)\pi}{n}$, where, $j=1,2, \cdots , n$.
\end{lemma}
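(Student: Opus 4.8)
The plan is to exploit the fact that the spectrum of a signed graph is invariant under switching, and that for a single cycle the switching class is completely determined by the cycle sign. First I would recall that switching at a vertex set $U \subseteq V(C^\sigma_n)$ replaces $A(C^\sigma_n)$ by $D\,A(C^\sigma_n)\,D$, where $D$ is the diagonal $\pm 1$ matrix with $D_{ii}=-1$ exactly when $v_i \in U$; since $D = D^{-1}$, this is a similarity transformation that preserves the eigenvalues, while it also preserves the sign of the cycle. Choosing $U$ appropriately (equivalently, switching along the spanning path $v_1 v_2 \cdots v_n$) I can make every edge of the cycle positive except the single ``closing'' edge $v_n v_1$, whose sign must then equal $\sigma(C^\sigma_n)=-$ because the cycle is unbalanced. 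Hence it suffices to compute the spectrum of the canonical matrix $A$ having $A_{i,i+1}=A_{i+1,i}=1$ for $1 \le i \le n-1$ and $A_{1,n}=A_{n,1}=-1$.

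Next I would diagonalize this canonical matrix by writing it as $A = T + T^{T}$, where $T$ is the signed shift defined by $Te_i = e_{i+1}$ for $1 \le i \le n-1$ and $Te_n = -e_1$. A direct check gives $T^{T} T = I$, so $T$ is orthogonal (in particular normal) and $T^{T} = T^{-1}$; moreover iterating the definition yields $T^{n} = -I$. Consequently the eigenvalues of $T$ are precisely the $n$-th roots of $-1$, namely $\mu_j = e^{\mathrm{i}(2j-1)\pi/n}$ for $j=1,\dots,n$. Because $T$ is normal, $T$ and $T^{-1}=T^{T}$ are simultaneously unitarily diagonalizable, so the eigenvalues of $A = T + T^{-1}$ are $\mu_j + \mu_j^{-1} = 2\cos\frac{(2j-1)\pi}{n}$ for $j=1,\dots,n$. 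Since these $n$ values (listed with multiplicity via the parameter $j$) exhaust the spectrum of the $n\times n$ matrix $A$, the claim follows.

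I expect the only real subtlety to be the switching-reduction step: one must argue carefully that any unbalanced cycle is switching-equivalent to the single-negative-edge representative and that switching leaves the spectrum unchanged. Everything after that is a short, self-contained computation, and it runs in exact parallel to the balanced case of Lemma~\ref{Lemma2}, where the same argument with $T^{n} = +I$ produces the $n$-th roots of unity and the eigenvalues $2\cos\frac{2\pi j}{n}$. As an alternative to the operator-theoretic step, one could instead solve the second-order recurrence $x_{i-1}+x_{i+1}=\lambda x_i$ coming from $Ax=\lambda x$ under the anti-periodic boundary conditions $x_{0}=-x_n$ and $x_{n+1}=-x_1$ forced by the negative edge; substituting $x_k = e^{\mathrm{i}\theta k}$ gives $\lambda = 2\cos\theta$ together with $e^{\mathrm{i}\theta n}=-1$, which again pins $\theta$ to $\frac{(2j-1)\pi}{n}$ and recovers the stated eigenvalues.
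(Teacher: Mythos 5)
Your proof is correct, but note that the paper does not actually prove this statement at all: Lemma~\ref{Lemma1} is imported verbatim from Fan, Wang and Wang \cite{Fan2013} (just as Lemma~\ref{Lemma2} is imported from Schwenk and Wilson), so there is no in-paper argument to compare against. What you supply is the standard self-contained derivation, and both halves of it are sound: the switching reduction is valid because conjugation by a diagonal $\pm1$ matrix $D$ preserves the spectrum and the cycle sign, and the recursive choice $D_{i+1,i+1}=D_{ii}\,\sigma(v_iv_{i+1})$ makes every path edge positive while forcing the closing edge to carry the sign $\sigma(C^\sigma_n)=-$; the operator step $A=T+T^{T}$ with $T^{n}=-I$ then delivers $2\cos\frac{(2j-1)\pi}{n}$. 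The only place you should tighten the write-up is the claim that the eigenvalues of $T$ are \emph{precisely} the $n$-th roots of $-1$: normality plus $T^{n}=-I$ only shows the eigenvalues lie among those roots, so you should add either that $\det(xI-T)=x^{n}+1$ (a one-line cofactor expansion for this signed cyclic shift) or that $e_1$ is a cyclic vector so the minimal polynomial has degree $n$ and equals the squarefree polynomial $x^{n}+1$. Your alternative argument via the recurrence $x_{i-1}+x_{i+1}=\lambda x_i$ with anti-periodic boundary conditions closes the same gap and is arguably the cleaner route; either way the lemma follows, and your remark that the balanced case (Lemma~\ref{Lemma2}) is the identical computation with $T^{n}=+I$ is exactly right.
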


From Lemmas \ref{Lemma2} and \ref{Lemma1}, we conclude the following:
\begin{lemma}\label{Lemma3}
Let $C^\sigma_n$ be a signed cycle of order $n$.\\
(i) If $C^\sigma_n$ is a balanced cycle, then
$$p^{+}(C^\sigma_n) =
\begin{cases}
 \lceil \frac {n}{2} \rceil-1, & \text{if } n \equiv 0,1 ~(mod~4) \\
\lceil \frac {n}{2} \rceil, & \text{if } n \equiv 2,3 ~(mod~4)
\end{cases}$$

(ii) If $C^\sigma_n$ is an unbalanced cycle, then
$$p^{+}(C^\sigma_n) =
\begin{cases}
 \lceil \frac {n}{2} \rceil-1, & \text{if } n \equiv 2,3 ~(mod~4) \\
\lceil \frac {n}{2} \rceil, & \text{if } n \equiv 0,1 ~(mod~4)
\end{cases}$$
\end{lemma}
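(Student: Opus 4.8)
The plan is to read off $p^{+}(C^\sigma_n)$ directly from the explicit eigenvalue lists supplied by Lemmas \ref{Lemma2} and \ref{Lemma1}, since $p^{+}$ is by definition the number of positive entries in those lists. In the balanced case the eigenvalues are $2\cos\frac{2\pi j}{n}$ for $j=0,1,\dots,n-1$, and such an eigenvalue is positive exactly when $\frac{2\pi j}{n}\in[0,\frac{\pi}{2})\cup(\frac{3\pi}{2},2\pi)$, i.e.\ when $\frac{j}{n}\in[0,\frac14)\cup(\frac34,1)$. Thus $p^{+}(C^\sigma_n)$ equals the number of integers $j\in\{0,\dots,n-1\}$ lying in $[0,\frac{n}{4})\cup(\frac{3n}{4},n)$. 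In the unbalanced case the eigenvalues are $2\cos\frac{(2j-1)\pi}{n}$ for $j=1,\dots,n$, and the same positivity criterion becomes $2j-1\in(0,\frac{n}{2})\cup(\frac{3n}{2},2n)$; here $p^{+}$ counts the odd integers $2j-1$ lying in those two intervals.

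First I would fix the underlying geometry: using the evenness relation $\cos\theta=\cos(2\pi-\theta)$, the indices contributing to $p^{+}$ split into a ``lower'' block near $j=0$ and a symmetric ``upper'' block near $j=n$, which lets me count one block and invoke symmetry rather than treating all $n$ indices separately. I would then carry out the tally of integers (resp.\ odd integers) in each interval as a function of $n \bmod 4$, writing $n=4m,\,4m+1,\,4m+2,\,4m+3$ in turn and evaluating the resulting floor/ceiling expressions against $\lceil\frac{n}{2}\rceil$.

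The one genuinely delicate point --- and the step I expect to be the main obstacle --- is the treatment of the interval endpoints, namely the indices at which the cosine vanishes and therefore contributes to the nullity rather than to $p^{+}$. In the balanced case $\cos\frac{2\pi j}{n}=0$ forces $j\in\{\frac{n}{4},\frac{3n}{4}\}$, which is possible only when $4\mid n$; in the unbalanced case $\cos\frac{(2j-1)\pi}{n}=0$ forces $2j-1\in\{\frac{n}{2},\frac{3n}{2}\}$, which is possible only when $n\equiv 2\pmod 4$ (so that $\frac{n}{2}$ is odd). These are precisely the residue classes in which a pair of zero eigenvalues appears and shifts the count by one, so the endpoints must be handled class-by-class rather than with a single uniform formula. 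Once the vanishing indices are correctly excluded, collecting the four residue counts in the balanced case and the four in the unbalanced case determines $p^{+}(C^\sigma_n)$ and yields the claimed piecewise expressions, completing the proof.
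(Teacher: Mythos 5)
Your method --- counting, for each residue of $n$ modulo $4$, how many of the explicit eigenvalues from Lemmas \ref{Lemma2} and \ref{Lemma1} are positive, with special care for the indices where the cosine vanishes --- is exactly the right one, and it is the only argument the paper itself offers (the lemma is presented there as an immediate consequence of Lemmas \ref{Lemma2} and \ref{Lemma1}, with no written proof). The gap is that your proposal never actually performs the tally: the last sentence simply asserts that the count ``yields the claimed piecewise expressions,'' and for odd $n$ it does not. Concretely, for the balanced triangle the eigenvalues $2\cos\frac{2\pi j}{3}$ are $2,-1,-1$, so $p^{+}=1=\lceil\tfrac{3}{2}\rceil-1$, whereas the displayed formula (case $n\equiv 3\pmod 4$) claims $p^{+}=\lceil\tfrac{3}{2}\rceil=2$; for the balanced $C^\sigma_5$ the eigenvalues are $2$, $2\cos\tfrac{2\pi}{5}>0$ (twice) and $2\cos\tfrac{4\pi}{5}<0$ (twice), so $p^{+}=3=\lceil\tfrac{5}{2}\rceil$, not $\lceil\tfrac{5}{2}\rceil-1$. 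Likewise the unbalanced triangle has eigenvalues $1,1,-2$, so $p^{+}=2=\lceil\tfrac{3}{2}\rceil$ rather than $\lceil\tfrac{3}{2}\rceil-1$.

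Carrying your count through all eight cases gives: balanced, $p^{+}=\lceil n/2\rceil-1$ for $n\equiv 0,3\pmod 4$ and $\lceil n/2\rceil$ for $n\equiv 1,2\pmod 4$; unbalanced, $p^{+}=\lceil n/2\rceil-1$ for $n\equiv 1,2\pmod 4$ and $\lceil n/2\rceil$ for $n\equiv 0,3\pmod 4$. In other words, the statement as printed has the two odd residue classes transposed; this is also forced by Smith's theorem (Lemma \ref{Lemma05}), since $C_3=K_3$ is complete multipartite and therefore has exactly one positive eigenvalue. So the defect in your write-up is that the decisive computational step is deferred and its outcome asserted rather than checked; executing it honestly reveals that the eigenvalue lists do not produce the claimed formulas for odd $n$. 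Ironically, the even cases --- the ones you flagged as delicate because of the vanishing eigenvalues --- are exactly the cases where the stated formulas are correct; it is the ``easy'' odd cases that fail. You should either prove the corrected statement or flag the discrepancy, since the error propagates into Theorems \ref{Theorem1}--\ref{Theorem3}.
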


We have the following subsequent corollary.
\begin{corollary}\label{Corollary1}
Let $C^\sigma$ be an unbalanced signed cycle, then $p^+ (C^\sigma) \geq 2$.
\end{corollary}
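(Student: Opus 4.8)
The plan is to read the spectrum of an unbalanced cycle directly off Lemma \ref{Lemma1} and then exploit a symmetry of that eigenvalue list. By Lemma \ref{Lemma1}, an unbalanced signed cycle $C^\sigma_n$ has eigenvalues $\lambda_j = 2\cos\frac{(2j-1)\pi}{n}$ for $j = 1, 2, \dots, n$. The first step is to secure a single positive eigenvalue: taking $j = 1$ gives $\lambda_1 = 2\cos\frac{\pi}{n}$, and since a cycle has $n \geq 3$ we have $0 < \frac{\pi}{n} \leq \frac{\pi}{3} < \frac{\pi}{2}$, so $\lambda_1 > 0$. Hence $p^+(C^\sigma) \geq 1$. (One could instead try to read the bound off the closed-form count in Lemma \ref{Lemma3}, but arguing from the spectrum is cleaner and handles every residue of $n$ uniformly.)

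The crux is to upgrade $\geq 1$ to $\geq 2$ by showing that $p^+(C^\sigma)$ is always \emph{even}. To this end I would introduce the involution $j \mapsto n+1-j$ on the index set $\{1,\dots,n\}$ and check that it preserves eigenvalues. Indeed $\frac{\bigl(2(n+1-j)-1\bigr)\pi}{n} = 2\pi - \frac{(2j-1)\pi}{n}$, and since $\cos(2\pi - x) = \cos x$ we obtain $\lambda_{n+1-j} = \lambda_j$. Thus the eigenvalues fall into orbits of size two under this involution, except at a fixed point $j = n+1-j$, which occurs only when $n$ is odd, namely at $j = \frac{n+1}{2}$. At that index $2j-1 = n$, so the unique unpaired eigenvalue equals $2\cos\pi = -2 < 0$. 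Consequently every positive eigenvalue is matched with an equal positive partner, the positive eigenvalues occur in pairs, and $p^+(C^\sigma)$ is even.

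Combining the two facts closes the argument: $p^+(C^\sigma)$ is a positive even integer, whence $p^+(C^\sigma) \geq 2$. I do not expect a serious obstacle here; the only point demanding care is the fixed-point bookkeeping in the involution, i.e.\ confirming that the lone unpaired eigenvalue (present only for odd $n$) is strictly negative, so that it cannot disturb the parity count of the positive eigenvalues. As a sanity check that the bound is sharp, the smallest case $n = 3$ has eigenvalues $1, 1, -2$, giving $p^+(C^\sigma_3) = 2$.
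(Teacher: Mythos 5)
Your proof is correct. The paper treats this corollary as an immediate consequence of the closed-form counts in Lemma~\ref{Lemma3}(ii), whereas you work directly from the eigenvalue list of Lemma~\ref{Lemma1}: the index $j=1$ supplies one positive eigenvalue, and the involution $j\mapsto n+1-j$ shows the positive eigenvalues come in pairs (its only possible fixed point, at $j=\frac{n+1}{2}$ for odd $n$, carries the eigenvalue $-2$), so $p^+(C^\sigma_n)$ is even and hence at least $2$. This is a genuinely different route, and in this instance the safer one: taken literally, Lemma~\ref{Lemma3}(ii) assigns $p^+=\lceil 3/2\rceil-1=1$ to an unbalanced triangle ($n\equiv 3\pmod{4}$), contradicting both the corollary and the spectrum $\{1,1,-2\}$ you compute as a sanity check; a direct count from Lemma~\ref{Lemma1} gives $p^+(C^\sigma_n)=\lceil n/2\rceil$ for $n\equiv 0,3\pmod{4}$ and $\lceil n/2\rceil-1$ for $n\equiv 1,2\pmod{4}$, so the odd residue classes in Lemma~\ref{Lemma3} appear to be interchanged. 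Your pairing argument bypasses that case analysis entirely and yields the slightly stronger fact that the positive inertia index of an unbalanced signed cycle is always even.
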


Yu et al. \cite{Yu2014} calculated the positive inertia of signed path $P^\sigma_n$.
\begin{lemma}\label{Lemma3a}
Let $P^\sigma_n$ be a signed path of order $n$, then $p^{+}(P^\sigma_n)=\lfloor \frac{n}{2} \rfloor$.
\end{lemma}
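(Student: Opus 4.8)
The plan is to prove the identity by induction on $n$, reducing by two vertices at each step and invoking the pendant-vertex reduction of Lemma~\ref{Lemma001}. The key structural observation is that an endpoint of the path is a pendant vertex whose unique neighbor is the second vertex, and that deleting both leaves exactly the signed subpath on the remaining $n-2$ vertices.

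First I would settle the two base cases needed for a step of size two. For $n=1$ the path is a single isolated vertex with adjacency matrix $[0]$, so $p^+(P^\sigma_1)=0=\lfloor 1/2\rfloor$. For $n=2$ the path is a single (positively or negatively) signed edge, whose adjacency matrix $\begin{pmatrix}0 & \pm 1\\ \pm 1 & 0\end{pmatrix}$ has eigenvalues $\pm 1$ regardless of the sign, so $p^+(P^\sigma_2)=1=\lfloor 2/2\rfloor$.

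For the inductive step, assume $n\ge 3$ and that the formula holds for all shorter signed paths. Let $x$ be an endpoint of $P^\sigma_n$ and $y$ its unique neighbor. By Lemma~\ref{Lemma001}, $p^+(P^\sigma_n)=p^+(P^\sigma_n-x-y)+1$. Since $P^\sigma_n-x-y=P^\sigma_{n-2}$, the induction hypothesis gives $p^+(P^\sigma_{n-2})=\lfloor (n-2)/2\rfloor = \lfloor n/2\rfloor - 1$, and hence $p^+(P^\sigma_n)=\lfloor n/2\rfloor$, completing the induction.

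There is essentially no hard part here; the only point to check with care is that deleting a pendant endpoint together with its neighbor from a path genuinely yields a shorter path (so that the induction hypothesis applies) and that the hypotheses of Lemma~\ref{Lemma001} — $x$ pendant with $y$ its unique neighbor — are met, which they are for a path endpoint. Alternatively, one could avoid induction entirely by noting that a path is acyclic, hence balanced, hence switching-equivalent to the ordinary path $P_n$; since switching is a $\pm 1$ diagonal similarity it preserves the spectrum and therefore $p^+$, reducing the claim to the classical eigenvalues $2\cos\frac{k\pi}{n+1}$ of $P_n$, of which exactly $\lfloor n/2\rfloor$ are positive.
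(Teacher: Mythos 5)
Your proof is correct. The paper itself does not prove this lemma---it is quoted from Yu et al.\ \cite{Yu2014} without argument---so there is no in-paper proof to compare against; but your induction is exactly the standard argument one would expect: the base cases $n=1,2$ are verified directly, and the step $p^+(P^\sigma_n)=p^+(P^\sigma_{n-2})+1$ follows from the pendant-vertex reduction of Lemma~\ref{Lemma001} applied to an endpoint, since deleting that endpoint and its unique neighbour from a path leaves the signed path on the remaining $n-2$ vertices. Your alternative route via switching is also valid and arguably more illuminating: a path is acyclic, hence balanced, hence switching-equivalent to the all-positive path $P_n$, and switching is conjugation by a $\pm1$ diagonal matrix, so it preserves the inertia; the classical eigenvalues $2\cos\frac{k\pi}{n+1}$ of $P_n$ are positive precisely for $k<\frac{n+1}{2}$, i.e.\ for exactly $\lfloor n/2\rfloor$ values of $k$. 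Either argument fully establishes the lemma.
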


Yu et al. \cite[Theorem 3.1]{Yu2016} characterized all connected signed graph $G^\sigma$ with $p^{+}(G^\sigma)=1$.
\begin{lemma}\label{Lemma4}
Let $G^\sigma$ be a connected signed graph. Then $p^{+}(G^\sigma)=1$ if and only if $G^\sigma$ is a balanced complete multipartite signed graph.
\end{lemma}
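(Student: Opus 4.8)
The plan is to prove the two implications separately, reducing everything to the underlying graph via balance. For the \emph{if} direction, suppose $G^\sigma$ is a balanced complete multipartite signed graph with parts of sizes $n_1,\dots,n_t$. Since $G^\sigma$ is balanced it is switching equivalent to the all-positive signing of its underlying graph, so $A(G^\sigma)=D\,A(K_{n_1,\dots,n_t})\,D$ for a diagonal $\pm1$ matrix $D$; in particular $G^\sigma$ is cospectral with $K_{n_1,\dots,n_t}$ and $p^+(G^\sigma)=p^+(K_{n_1,\dots,n_t})$. Writing the ordinary adjacency matrix as $A=J_n-\bigoplus_{i=1}^{t}J_{n_i}$, I would apply Weyl's inequality with $X=J_n$ (rank one, $\lambda_2(J_n)=0$) and $Y=-\bigoplus_i J_{n_i}\preceq 0$ to obtain $\lambda_2(A)\le\lambda_2(J_n)+\lambda_{\max}(Y)\le 0$. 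Hence $A$ has at most one positive eigenvalue, and since $\tr A=0$ with $A\neq 0$ it has at least one; thus $p^+=1$.

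For the \emph{only if} direction I would first show that $p^+(G^\sigma)=1$ forces $G^\sigma$ to be balanced. If not, $G^\sigma$ contains a negative cycle; taking a \emph{shortest} one $C$, any chord would split $C$ into two strictly shorter cycles whose sign product equals $\sigma(C)=-$ (the chord's sign cancels), producing a shorter negative cycle and contradicting minimality. So the shortest negative cycle is chordless, i.e.\ an \emph{induced} unbalanced cycle $C^\sigma$, and Corollary \ref{Corollary1} together with Lemma \ref{Lemma01} would give $p^+(G^\sigma)\ge p^+(C^\sigma)\ge 2$, a contradiction. Therefore $G^\sigma$ is balanced, and as above it is cospectral with its underlying graph $G$, which consequently satisfies $p^+(G)=1$.

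It then remains to show that a connected ordinary graph with a single positive eigenvalue is complete multipartite, which I regard as the heart of the argument. Let $\lambda>0$ be the Perron eigenvalue with positive unit eigenvector $\phi$; since every other eigenvalue is nonpositive, the deflation $M:=\lambda\phi\phi^{\!\top}-A$ is positive semidefinite, with $M_{ij}=\lambda\phi_i\phi_j$ on non-edges and on the diagonal, and $M_{ij}=\lambda\phi_i\phi_j-1$ on edges. I would prove that non-adjacency is transitive, equivalently that $G$ contains no induced $K_2\cup K_1$, which is precisely the condition for $G$ to be complete multipartite. Supposing instead that $uw$ is an edge while $v$ is adjacent to neither $u$ nor $w$, evaluating the $3\times 3$ principal minor $M'$ of $M$ on $\{u,v,w\}$ and using its rank-one-plus-perturbation structure, a short determinant computation yields $\det M'=-\lambda\phi_v^{2}<0$, contradicting $M\succeq 0$. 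Hence non-adjacency is transitive, $G$ is complete multipartite, and so $G^\sigma$ is a balanced complete multipartite signed graph.

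The main obstacle is this last structural step: the minimal obstruction $K_2\cup K_1$ itself already has exactly one positive eigenvalue, so a naive forbidden-subgraph-plus-interlacing argument cannot work, and one must instead exploit the global positive-semidefinite deflation to extract a sign-definite $3\times 3$ minor. Alternatively, after the balance reduction one may simply invoke the classical theorem of Smith that the connected graphs with exactly one positive eigenvalue are precisely the complete multipartite graphs.
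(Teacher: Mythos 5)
Your proof is correct, but it is worth noting that the paper does not prove this lemma at all: it imports it verbatim as Theorem 3.1 of Yu, Feng and Qu (2016), just as it imports Smith's characterization of graphs with one positive eigenvalue as Lemma \ref{Lemma05}. What you have written is therefore a genuinely different (and more self-contained) route. Your reduction to the underlying graph via switching equivalence, and your observation that a \emph{shortest} negative cycle must be chordless (so that Corollary \ref{Corollary1} and Lemma \ref{Lemma01} apply to an \emph{induced} unbalanced cycle), is exactly the argument the paper needs but states only loosely in the proof of Theorem \ref{Theorem1}, where the existence of an induced unbalanced cycle is asserted without the chord-splitting justification; your version supplies the missing detail that a chord splits a negative cycle into two shorter cycles whose signs multiply to $-$. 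The heart of your argument, the deflation $M=\lambda\phi\phi^{\top}-A\succeq 0$ and the computation $\det M'=-\lambda\phi_v^{2}<0$ on an induced $K_2\cup K_1$ (which I have checked), amounts to a direct proof of Smith's theorem, so you have re-derived Lemma \ref{Lemma05} rather than invoking it; as you note yourself, after the balance reduction one could simply cite Smith, which is effectively what the paper's own toolkit does. The only blemish is the trivial edge case: a connected balanced complete multipartite signed graph with a single part is $K_1$ and has $p^{+}=0$, so the ``if'' direction implicitly requires at least one edge (equivalently $t\ge 2$); this imprecision is inherited from the statement itself and is shared by the paper.
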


The length of a signed path (or simply a path) $P^\sigma$ refers to the total number of edges in $P^\sigma$. For any two vertices $y$ and $z$, the distance between them, denoted by $d(y,z)$, is defined as the length of the shortest path between $y$ and $z$.

\begin{lemma}\label{Lemma04}
Let $G^\sigma$ be a connected signed graph with girth $g_r$ and let $C^\sigma$ be a shortest cycle in $G^\sigma$. If $y, y' \in V(C^\sigma)$ and there exists a path $P^\sigma$ of length $k$ from $y$ to $y'$ satisfying $(V(P^\sigma) \setminus \{y, y'\}) \cap V(C^\sigma) = \emptyset$, then
\[
 \left\lceil \frac{g_r}{2} \right\rceil \leq k.
\]
\end{lemma}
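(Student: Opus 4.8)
The plan is to manufacture a second cycle out of the path $P^\sigma$ together with part of $C^\sigma$, and then invoke the minimality of the girth. The two vertices $y,y'$ split $C^\sigma$ into two internally disjoint $y$--$y'$ arcs, of lengths $a$ and $b$ with $a+b=g_r$. After relabelling I may assume $a\le b$; then $2a\le a+b=g_r$, and since $a$ is an integer this gives $a\le\lfloor g_r/2\rfloor$. The shorter arc is the one I intend to glue $P^\sigma$ onto.

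Before the main argument I would dispose of the degenerate possibilities. If $y=y'$ then $P^\sigma$ is itself a closed path, i.e.\ a cycle of length $k$, and the definition of girth already gives $k\ge g_r\ge\lceil g_r/2\rceil$. If instead $P^\sigma$ is a single edge lying on $C^\sigma$, then $P^\sigma$ is just an arc of the cycle rather than an alternative route, and this case is excluded by the intended reading of the hypothesis; so I may assume $P^\sigma$ contains at least one vertex outside $C^\sigma$. (A length-one $P^\sigma$ lying \emph{off} the cycle, i.e.\ a genuine chord, cannot occur at all, since joined with the shorter arc it would produce a cycle of length $a+1<g_r$, contradicting the girth.)

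For the main step I would argue as follows. Since $(V(P^\sigma)\setminus\{y,y'\})\cap V(C^\sigma)=\emptyset$, the path $P^\sigma$ meets the shorter arc only in the common endpoints $y$ and $y'$; hence $P^\sigma$ together with that arc is a cycle of $G^\sigma$, of length $k+a$. By the minimality of the girth this length is at least $g_r$, so
\[
k \ \ge\ g_r-a \ \ge\ g_r-\left\lfloor\tfrac{g_r}{2}\right\rfloor \ =\ \left\lceil\tfrac{g_r}{2}\right\rceil,
\]
where the last equality is the identity $n-\lfloor n/2\rfloor=\lceil n/2\rceil$. This is exactly the claimed bound.

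The only point that genuinely needs care — and the step I expect to carry the real content — is verifying that $P^\sigma$ glued to the shorter arc is a bona fide cycle rather than a closed walk that backtracks. This is precisely where the internal-disjointness hypothesis enters, and it is also what forces the separate treatment of the degenerate cases above ($y=y'$ and $P^\sigma$ an edge of $C^\sigma$), since in those situations one of the two candidate cycles collapses. Everything else is the elementary arithmetic of writing $g_r=a+b$ and selecting the smaller summand.
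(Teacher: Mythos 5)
Your argument is the same as the paper's: split $C^\sigma$ at $y,y'$ into two arcs, note the shorter one has length $a\le\lfloor g_r/2\rfloor$, concatenate it with $P^\sigma$ to get a cycle of length $a+k\ge g_r$, and conclude $k\ge g_r-\lfloor g_r/2\rfloor=\lceil g_r/2\rceil$. Your treatment is in fact slightly more careful than the paper's, since you explicitly rule out the degenerate cases ($y=y'$, $P^\sigma$ an edge of $C^\sigma$, or a chord) where the glued walk would fail to be a genuine cycle.
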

\begin{proof}
Remember that $C^\sigma$ have two different paths from $y$ to $y'$. The total length of both the 
paths equals the length of $C^\sigma$, i.e., $g_r$. Thus, the shorter length of these paths is at 
most $\left\lfloor \frac{g}{2} \right\rfloor$. The shorter of these two paths from $y$ to $y'$, followed 
by the path $P^\sigma$, forms a cycle of length at most $\left\lfloor \frac{g_r}{2} \right\rfloor + k$. 
Therefore, we obtain:
\[
\left\lfloor \frac{g_r}{2} \right\rfloor + k \geq g_r,
\]
and consequently
\[
\left\lceil \frac{g_r}{2}\right\rceil \leq k.
\]
This concludes the proof.
\end{proof}

Let $G^{\sigma}[H^{\sigma}]$ be a subgraph of $G^{\sigma}$ induced by $H^{\sigma}$, and 
let $x$ be a vertex outside $H^{\sigma}$. We denote the distance between $x$ and 
$G^{\sigma}[H^{\sigma}]$ as: $d(x, G^{\sigma}[H^{\sigma}]) = \min\{ d(x, y) \mid y \in H^{\sigma} \}$. Next, 
we define $N_j(G^{\sigma}[H^{\sigma}])$ as: 
$$N_j(G^{\sigma}[H^{\sigma}]) = \{ x \in V(G^{\sigma}) \setminus H^{\sigma} \mid d(x, G^{\sigma}[H^{\sigma}]) = j, \quad j = 1, 2, \dots, n \}.$$ 
The number of vertices in $N_j(G^{\sigma}[H^{\sigma}])$ is denoted by $|N_j(G^{\sigma}[H^{\sigma}])|$.

\begin{lemma}\label{Lemma5}
Let $G^{\sigma}$ be a connected signed graph with girth $g_r$, and let $C^{\sigma}$ be a shortest cycle in $G^{\sigma}$. If $p^+(C^{\sigma}) = p^+(G^{\sigma})$, then $N_j(C^{\sigma}) = \emptyset$ for $j \geq 2$.
\end{lemma}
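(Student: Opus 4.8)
The plan is to argue by contradiction, exploiting the pendant-vertex reduction (Lemma \ref{Lemma001}) together with the induced-subgraph monotonicity of $p^+$ (Lemma \ref{Lemma01}). Suppose that $N_j(C^\sigma) \neq \emptyset$ for some $j \geq 2$, and pick a vertex $x$ with $d(x, C^\sigma) = j \geq 2$. I would then choose a shortest path $x = u_0, u_1, \dots, u_{j-1}, u_j$ from $x$ to the cycle, where $u_j \in V(C^\sigma)$; since this path is geodesic, $d(u_i, C^\sigma) = j - i$ for each $i$.

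Next I would work inside the induced subgraph $H^\sigma := G^\sigma[V(C^\sigma) \cup \{u_0, u_1, \dots, u_{j-1}\}]$, that is, the cycle together with the attached geodesic. The crucial observation is that $H^\sigma$ contains exactly one vertex at each distance level $1, 2, \dots, j-1$ from $C^\sigma$ (namely $u_{j-1}, \dots, u_1$) and only $x$ itself at level $j$. Since every neighbour of $x$ in $G^\sigma$ lies at distance $j-1$, $j$, or $j+1$ from $C^\sigma$, its only possible neighbour inside $H^\sigma$ is $u_1$; hence $x$ is a pendant vertex of $H^\sigma$ with unique neighbour $u_1$.

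Now I apply Lemma \ref{Lemma001} to $H^\sigma$ with the pendant pair $(x, u_1)$, obtaining $p^+(H^\sigma) = p^+(H^\sigma - x - u_1) + 1$. The graph $H^\sigma - x - u_1$ still contains $C^\sigma$ as an induced subgraph (it equals $C^\sigma$ when $j = 2$, and $C^\sigma$ together with the path $u_2 \cdots u_{j-1}$ when $j \geq 3$), so Lemma \ref{Lemma01} gives $p^+(H^\sigma - x - u_1) \geq p^+(C^\sigma)$. Combining these, $p^+(H^\sigma) \geq p^+(C^\sigma) + 1$. But $H^\sigma$ is itself an induced subgraph of $G^\sigma$, so a second application of Lemma \ref{Lemma01} forces $p^+(C^\sigma) + 1 \leq p^+(H^\sigma) \leq p^+(G^\sigma) = p^+(C^\sigma)$, a contradiction. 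Hence $N_j(C^\sigma) = \emptyset$ for all $j \geq 2$.

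I expect the only delicate point to be the verification that $x$ is genuinely pendant in $H^\sigma$: one must ensure that no chord or shortcut creates an extra neighbour of $x$ inside the chosen induced subgraph. This is handled precisely by restricting attention to a single geodesic, which guarantees a unique vertex at each intermediate distance level; this distance-layer bookkeeping is what makes the pendant reduction legitimate, while everything else is a routine two-fold use of interlacing via Lemma \ref{Lemma01}.
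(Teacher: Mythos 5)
Your proof is correct and follows essentially the same route as the paper: locate a vertex at distance at least $2$ from $C^\sigma$, realize it as a pendant vertex of a suitable induced subgraph, and combine Lemma \ref{Lemma001} with Lemma \ref{Lemma01} to contradict $p^+(C^{\sigma}) = p^+(G^{\sigma})$. The only cosmetic difference is that the paper first reduces to showing $N_2(C^\sigma)=\emptyset$ and works with the two-vertex extension $G^{\sigma}[V(C^{\sigma}) \cup \{y, y'\}]$, whereas you attach the whole geodesic; both are valid.
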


\begin{proof}
Indeed, we only need to establish that $N_2(C^{\sigma}) = \emptyset$. Suppose, for the 
sake of contradiction, that $N_2(C^{\sigma}) \neq \emptyset$. Let $y' \in N_2(C^{\sigma})$ 
and $y' \sim y \in N_1(C^{\sigma})$. Then, $y'$ is a pendant vertex of $G^{\sigma}[V(C^{\sigma}) \cup \{y, y'\}]$. 
Consequently,
\[
p^+(G^{\sigma}) \geq p^+(G^{\sigma}[V(C^{\sigma}) \cup \{y, y'\}]) = p^+(C^{\sigma}) + 1 > p^+(C^{\sigma}),
\]
by Lemmas \ref{Lemma01} and \ref{Lemma001}, which contradicts the assumption that
$p^+(C^{\sigma}) = p^+(G^{\sigma})$. Hence, $N_2(C^{\sigma}) = \emptyset$, and 
therefore, $N_j(C^{\sigma}) = \emptyset$ for $j \geq 3$. This completes the proof.
\end{proof}

Smith \cite{Smith1977} showed the following characterization. 
\begin{lemma}\label{Lemma05}
A graph has precisely one positive eigenvalue if and only if the set of its non-isolated vertices 
constitute a complete multipartite graph.
\end{lemma}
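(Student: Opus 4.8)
The plan is to dispose of isolated vertices first and then prove the two directions separately. An isolated vertex contributes only a zero eigenvalue, so $p^+(G)$ equals $p^+$ of the subgraph $G'$ induced by the non-isolated vertices; hence it suffices to show that a graph with no isolated vertices has exactly one positive eigenvalue if and only if it is complete multipartite. I would also record that if $p^+(G)=1$ then $G'$ is connected: every graph with at least one edge has a positive eigenvalue (its trace is $0$ while its adjacency matrix is nonzero, so the eigenvalues cannot all vanish and, summing to $0$, there is at least one positive and one negative), and thus two nontrivial components would already force $p^+\ge 2$. This reduces everything to a connected graph on its non-isolated vertices.

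For the direction ``complete multipartite $\Rightarrow p^+=1$'', I would exploit the equitable partition of $K_{n_1,\dots,n_t}$ into its $t$ parts. Vectors supported on a single part and summing to zero there lie in the kernel of $A$, producing the eigenvalue $0$ with multiplicity $\sum_i (n_i-1)=n-t$; the remaining $t$ eigenvalues are those of the symmetrised quotient matrix $\hat B=\mathbf v\,\mathbf v^{T}-\mathrm{diag}(n_1,\dots,n_t)$, where $\mathbf v=(\sqrt{n_1},\dots,\sqrt{n_t})^{T}$. Since $-\mathrm{diag}(n_1,\dots,n_t)$ is negative definite and $\mathbf v\,\mathbf v^{T}$ is a rank-one positive semidefinite perturbation, $\hat B$ has at most one positive eigenvalue; and because $\tr \hat B=0$ with $\hat B\ne 0$, it has at least one positive and at least one negative eigenvalue. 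Hence $\hat B$ has exactly one positive eigenvalue, giving $p^+(K_{n_1,\dots,n_t})=1$.

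For the converse I would argue by forbidden induced subgraphs through the interlacing Lemma~\ref{Lemma01}: if $p^+(G)=1$ then every induced subgraph $H$ satisfies $p^+(H)\le 1$, so $G'$ can contain none of $P_4$, $2K_2=K_2\cup K_2$, or the paw (a triangle with one pendant edge), each of which has $p^+=2$. A graph is complete multipartite exactly when non-adjacency is transitive, i.e.\ when it has no induced $K_2\cup K_1$ (an edge $ac$ together with a vertex $b$ non-adjacent to both endpoints). The crux is therefore to rule out such a configuration inside the connected graph $G'$: following a shortest $b$--$\{a,c\}$ path $b=v_0,\dots,v_k$, one obtains an induced $P_4$ on $\{v_{k-2},v_{k-1},a,c\}$ when the last internal vertex $v_{k-1}$ misses one of $a,c$, and an induced paw on the same four vertices when $v_{k-1}$ sees both (with $v_0=b$ when $k=2$); in every case shortest-path minimality forces the required non-edges. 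This case analysis along the shortest path, together with verifying the small-graph values $p^+(P_4)=p^+(2K_2)=p^+(\text{paw})=2$, is the main obstacle; once it is settled, transitivity of non-adjacency yields that $G'$ is complete multipartite.

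Finally, I would note that the statement also follows immediately from the signed result already available: viewing $G'$ as an all-positive, hence balanced, signed graph, Lemma~\ref{Lemma4} characterises $p^+=1$ as precisely the balanced complete multipartite signed graphs, and under the all-positive signature this is simply ``complete multipartite'', recovering the claim once the isolated vertices are restored.
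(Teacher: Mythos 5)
Your argument is correct, but note that the paper does not prove this statement at all: Lemma~\ref{Lemma05} is quoted verbatim from Smith's 1977 paper and used as a black box, so there is no internal proof to compare against. What you have supplied is a self-contained proof of that classical theorem, and both halves check out. The sufficiency via the equitable partition of $K_{n_1,\dots,n_t}$ is sound: the vectors supported on one part and summing to zero do lie in the kernel, the symmetrised quotient $\mathbf v\,\mathbf v^{T}-\mathrm{diag}(n_1,\dots,n_t)$ is a rank-one positive semidefinite perturbation of a negative definite matrix and hence has at most one positive eigenvalue, and the trace-zero observation forces exactly one. The necessity via forbidden induced subgraphs is essentially Smith's original route: $p^+(P_4)=p^+(2K_2)=p^+(\text{paw})=2$ all verify (for the paw, $\det A=1>0$ together with $\tr A=0$ forces inertia $(2,0,2)$), your trace argument correctly reduces to a connected graph on the non-isolated vertices, and the shortest-path case analysis does produce an induced $P_4$ or paw from any induced $K_2\cup K_1$, since minimality guarantees the non-edges from $v_{k-2}$ to $\{a,c\}$. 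Two minor remarks: the $2K_2$ obstruction is never actually invoked once you have connectivity, so it could be dropped; and your closing observation that the lemma follows from Lemma~\ref{Lemma4} is valid within the logical structure of this paper, but should be flagged as potentially circular as a standalone derivation, since the signed characterisation of Yu et al.\ is itself proved in the literature with Smith's theorem as an ingredient. Overall your proof is a genuine addition relative to the paper, which offers only the citation.
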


\section{Signed graphs with positive inertia index of 
$\lceil \frac {g_{r}}{2} \rceil-1$ and $\lceil \frac {g_{r}}{2} \rceil$}\label{sec3}

\begin{proof}[\tb {Proof of Theorem \ref{Theorem1}}]
Assume that $C^\sigma_{g_r}$ is a cycle in $G^\sigma$ with shortest length of $g_{r}$. Then by 
Lemma \ref{Lemma3} we have, $p^{+}(C^\sigma_{g_r})=\lceil \frac {g_r}{2} \rceil-1$ if 
$C^\sigma_{g_{r}}$ is a balanced signed cycle with $g_{r} \equiv 0,1 \pmod{4}$ or an unbalanced signed 
cycle with $g_{r} \equiv 2,3 \pmod{4}$ and $p^{+}(C^\sigma_{g_r})=\lceil \frac {g_r}{2} \rceil$ if 
$C^\sigma_{g_{r}}$ is a balanced signed cycle with $g_{r} \equiv 2,3 \pmod{4}$ or an unbalanced signed 
cycle with $g_{r} \equiv 0,1 \pmod{4}$. Hence, 
$p^{+}(G^\sigma) \geq p^{+}(C^\sigma_{g_r}) \geq \lceil \frac {g_r}{2} \rceil-1$ by Lemma \ref{Lemma01}, 
as desired. Next, we address the cases where equality holds in the following:\\

Clearly, by Lemma \ref{Lemma3}, we have $p^{+}(G^\sigma)=\lceil \frac {g_r}{2} \rceil-1$ if $G^\sigma$ is a balanced signed cycle 
$C^\sigma_{g_r}$ satisfying $g_{r} \equiv 0,1 \pmod{4}$ or an unbalanced signed cycle $C^\sigma_{g_r}$ 
satisfying $g_{r} \equiv 2,3 \pmod{4}$. Assume that $G^\sigma$ is a balanced 
complete multipartite signed graph $K^\sigma_{n_1,n_2, \cdots , n_t}$ with grith $g_r$. Then,
$p^{+}(K^\sigma_{n_1,n_2, \cdots , n_t})=1$, by Lemma \ref{Lemma4}. Remember that 
$g_r(K^\sigma_{n_1,n_2, \cdots , n_t})=4$, if $t=2$ and $g_r(K^\sigma_{n_1,n_2, \cdots , n_t})=3$, 
if $t \geq 3$. Thus, we have $p^{+}(G^\sigma)=1=\lceil \frac {g_r}{2} \rceil-1$.\\

Next, we prove the necessity.\\
Let $G^\sigma$ be a connected signed graph with girth $g_r$ and $p^{+}(G^\sigma)=\lceil \frac {g_r}{2} \rceil-1$. 
Then, $p^+(C^\sigma) = p^{+}(G^\sigma)=\lceil \frac {g_r}{2} \rceil-1$, which implies that $g_r \equiv 0,1 \pmod{4}$ 
or $g_r \equiv 2,3 \pmod{4}$ by Lemma \ref{Lemma3}. If $G^\sigma$ is a signed cycle $C^\sigma_{g_r}$, then 
$G^\sigma$ is either a balanced signed cycle satisfying $g_r \equiv 0,1 \pmod{4}$ or an unbalanced signed cycle 
satisfying $g_{r} \equiv 2,3 \pmod{4}$, by Lemma \ref{Lemma3}, as desired. Assume that $G^\sigma$ is not a signed 
cycle and let $y \in N_1(C^\sigma) \neq \emptyset$. Since $p^{+}(C^\sigma) = p^{+}(G^\sigma)$ by Lemma \ref{Lemma5}, 
it follows that $N_j(C^\sigma) = \emptyset$ for all $j \geq 2$.

If $y$ is adjacent to exactly one vertex $z$ of $V(C^\sigma)$, then
$$p^+(G^\sigma[V(C^\sigma) \cup \{y\}]) = p^+(G^\sigma[V(C^\sigma) \setminus \{z\}]) + 1 = 
\{\lceil \frac{g_r}{2}\rceil -1\} + 1 = \lceil \frac{g_r}{2}\rceil,$$
or
$$p^+(G^\sigma[V(C^\sigma) \cup \{y\}]) = p^+(G^\sigma[V(C^\sigma) \setminus \{z\}]) + 1 = 
\lceil \frac{g_r}{2}\rceil + 1 = \lceil \frac{g_r}{2}\rceil+1,$$
by Lemmas \ref{Lemma001} and \ref{Lemma3}, and so
$$p^+(G^\sigma) \geq p^+(G^\sigma[V(C^\sigma) \cup \{y\}]) = \lceil \frac{g_r}{2}\rceil,$$
or
$$p^+(G^\sigma) \geq p^+(G^\sigma[V(C^\sigma) \cup \{y\}]) = \lceil \frac{g_r}{2}\rceil+1,$$
by Lemma \ref{Lemma01}, a contradiction.

Thus, the vertex $y$ is at least neighboring two distinct vertices $z, z' \in V(C^\sigma)$, which suggests that 
there exists a path $P^\sigma$ of given length $2$ from $z$ to $z'$ which satisfies 
$(V(P) \setminus \{z, z'\}) \cap V(C^\sigma) = \emptyset$.

Consequently, \(\lceil \frac{g_r}{2} \rceil \leq 2\) by Lemma \ref{Lemma04}, meaning that $g_r = 3$ or $4$, 
and therefore $p^+(G^\sigma) = \lceil \frac{g}{2} \rceil - 1 = 1$. Hence, $G^\sigma$ is a complete 
multipartite signed graph ($g_r = 3$ or $g_r = 4$) according to Lemma \ref{Lemma05}.\

 Next, we have to show that $G^\sigma$ is balanced. On contrary, assume that $G^\sigma$ is unbalanced. Then, there 
 must exist an unbalanced cycle in $G^\sigma$ as an induced subgraph. By Corollary \ref{Corollary1} and Lemma 
 \ref{Lemma01}, $p^+(G^\sigma) \geq 2$, which arises contradiction. Thus, $G^\sigma$ is a balanced complete multipartite 
 signed graph if $p^+(G^\sigma)=1$.
\end{proof}

The proof of Theorem \ref{Theorem2} can be directly followed from Theorem \ref{Theorem1}, Lemmas \ref{Lemma01} 
and \ref{Lemma3} by excluding the two cases of Theorem \ref{Theorem1}, where the equality holds.\\

In what follows, we characterize connected signed graphs $G^\sigma$ with girth $g_r$ and 
$p^+(G^\sigma) = \lceil \frac{g}{2} \rceil$. The following lemma \ref{Lemma6} establishes the structure of 
a canonical unicyclic graph $G^\sigma$ with girth $g_r$ satisfying $p^+(G^\sigma) = \lceil \frac{g_r}{2} \rceil$.

\begin{lemma}\label{Lemma6}
 Let $G^\sigma$ be a canonical unicyclic signed graph with girth $g_r$. Then $p^+(G^\sigma)=\lceil \frac{g_r}{2}\rceil$ 
 if and only if:\\
(i) $G^\sigma=C^\sigma_{g_{r}}$ is a balanced signed cycle satisfying $g_{r} \equiv 2,3 \pmod{4}$ or an unbalanced 
signed cycle satisfying $g_{r} \equiv 0,1 \pmod{4}$;\\
(ii) $G^\sigma$ be a graph where either exactly one vertex is attached to a pendant star, or multiple vertices are 
each attached to a pendant star such that all paths having odd number of vertices between any two consecutive pendant 
stars when $g_r=1,3 \pmod4$;\\
(iii) $G^\sigma$ be a graph where either exactly one vertex is attached to a pendant star, or multiple vertices are 
each attached to a pendant star such that there is exactly one path with an even number of vertices (including zero) 
between any two consecutive pendant stars when $g_r=0,2 \pmod4$.

\end{lemma}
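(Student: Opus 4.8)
The plan is to split the argument according to whether $G^\sigma$ carries any pendant stars. If no vertex of the shortest cycle is decorated, then $G^\sigma = C^\sigma_{g_r}$ is a bare signed cycle, and the value of $p^+$ is read off directly from Lemma \ref{Lemma3}; this is precisely item (i), and it is the only place where the balanced/unbalanced distinction and the residue $g_r \bmod 4$ enter. Thus the substance of the lemma is the case in which at least one pendant star is present, which I would attack by repeatedly stripping pendant vertices via Lemma \ref{Lemma001}.

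The heart of the proof is the following reduction. Let $v_1,\dots,v_s$ (with $s\ge 1$) be the cycle vertices carrying pendant stars, with $t_i\ge 1$ leaves at $v_i$. For each $i$, choose one leaf of $v_i$ and apply Lemma \ref{Lemma001} to that leaf and its unique neighbour $v_i$: this raises $p^+$ by exactly $1$ and deletes $v_i$, which both isolates the remaining $t_i-1$ leaves of $v_i$ (isolated vertices contribute $0$ to $p^+$) and cuts the cycle at $v_i$. After doing this for all $s$ decorated vertices, what remains is the disjoint union of the $s$ open arcs of the cycle lying between consecutive decorated vertices, each arc being a signed path $P^\sigma_{a_i}$ on its $a_i$ undecorated vertices, together with a collection of isolated leaves. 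Since $p^+$ is additive over connected components and $p^+(P^\sigma_{a_i}) = \lfloor a_i/2\rfloor$ by Lemma \ref{Lemma3a}, I obtain the exact identity
\[
 p^+(G^\sigma) = s + \sum_{i=1}^{s}\left\lfloor \frac{a_i}{2}\right\rfloor = \frac{1}{2}\bigl(g_r + s - R\bigr),
\]
where $R$ is the number of arcs of odd length $a_i$ and $\sum_{i=1}^{s} a_i = g_r - s$. Crucially, this value is independent of every sign, so for $s\ge 1$ the balanced/unbalanced dichotomy plays no role.

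From here the characterisation is pure parity bookkeeping. Equating the identity to $\lceil g_r/2\rceil$ reduces extremality to a single parity constraint tying $s$ to the number of odd-length arcs $R$; unwinding this constraint separately for $g_r$ even and $g_r$ odd yields the two admissible families of arc-parity patterns, which are exactly the structural descriptions recorded in (ii) and (iii) (and it absorbs the degenerate situations $a_i=0$ and the single-star case $s=1$ automatically). Because the displayed relation is an exact identity rather than an inequality, both directions of the ``if and only if'' fall out simultaneously: the prescribed parity pattern holds precisely when the count attains $\lceil g_r/2\rceil$. Combining this with item (i) for the case $s=0$ completes the equivalence.

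The main obstacle, and where I would spend the most care, is the reduction step rather than the closing arithmetic. One must argue cleanly that each pendant star contributes exactly $+1$ regardless of how many leaves it carries and regardless of whether its base vertex is adjacent to another decorated vertex (the empty-arc case $a_i=0$), and that after all $s$ peelings the residual graph really is the claimed disjoint union of open arcs plus isolated leaves — in particular that deleting the decorated vertices neither merges nor otherwise corrupts the surviving path segments, so that $\sum_i a_i = g_r - s$ is maintained throughout the successive deletions. Once the exact formula $p^+(G^\sigma) = s + \sum_i \lfloor a_i/2\rfloor$ is secured, matching it against $\lceil g_r/2\rceil$ for the two residue classes of $g_r$ is routine.
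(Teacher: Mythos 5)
Your proposal is essentially the paper's own proof: the paper likewise disposes of the bare-cycle case via Lemma \ref{Lemma3}, removes each pendant star together with its base vertex using Lemma \ref{Lemma001} (each star contributing exactly $+1$), reduces to the disjoint union of the arcs $P^\sigma_{n_1},\dots,P^\sigma_{n_t}$ between consecutive decorated vertices with $g_r=t+n_1+\dots+n_t$, invokes Lemma \ref{Lemma3a}, and finishes with the same parity count $t+\sum_j\lfloor n_j/2\rfloor=\lceil g_r/2\rceil$; your closed form $p^+(G^\sigma)=\tfrac12(g_r+s-R)$ is just a repackaging of that computation. One caveat: you assert, without carrying out the final matching, that unwinding the parity constraint reproduces items (ii) and (iii) verbatim, and it does not. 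For $g_r$ odd your constraint reads $s-R=1$, forcing \emph{exactly one arc of even length} (the rest odd), while for $g_r$ even it reads $s=R$, forcing \emph{all arcs odd} --- the opposite of how the lemma statement attaches these two patterns to the residue classes $g_r\equiv 1,3$ and $g_r\equiv 0,2\pmod 4$. (Concretely, a triangle with a pendant vertex at two of its three cycle vertices has $p^+=2=\lceil 3/2\rceil$, yet its two arcs have lengths $0$ and $1$, violating the ``all arcs odd'' clause of (ii).) The paper's own Case 1 and Case 2 derive the same conditions you would, i.e.\ its conclusions are swapped relative to the statement, so this is a defect of the statement rather than of your argument; but a complete write-up must not gloss over that last step.
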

\begin{proof}
If $G^\sigma=C^\sigma_{g_{r}}$, then $p^+(G^\sigma)=\lceil \frac{g_r}{2}\rceil$ if and only if 
$G^\sigma$ is a balanced signed cycle satisfying $g_{r} \equiv 2,3\pmod{4}$ or an unbalanced 
signed cycle satisfying $g_{r} \equiv 0,1\pmod{4}$ by Lemma \ref{Lemma3}. Hence, we now consider 
the case when $G^\sigma$ is not a signed cycle.

Let $C^\sigma$ be the unique signed cycle of $G^\sigma$ with $g_r$ vertices. Assume that $t \geq 1$ 
pendant stars are attached to $C^\sigma$, and let $P^\sigma_{n_1}, P^\sigma_{n_2}, \dots, P^\sigma_{n_t}$ 
be the paths which can be obtained after removing $t$ pendant stars from $G^\sigma$. Then we have:
\[
g_r = t + n_1 + n_2 + \dots + n_t.\]

Since $G^\sigma$ contains $t$ pendant stars, it follows from Lemma \ref{Lemma001} that:
\[
t + p^+(P^\sigma_{n_1}) + p^+(P^\sigma_{n_2}) + \dots + p^+(P^\sigma_{n_t}) = p^+(G^\sigma).
\]
On the other hand, $p^+(G^\sigma) = \lceil \frac{g_r}{2}\rceil$, thus we have
\[
t + p^+(P^\sigma_{n_1}) + p^+(P^\sigma_{n_2}) + \dots + p^+(P^\sigma_{n_t}) = \lceil \frac{g_r}{2}\rceil.
\]

Next, we divide the discussion into the following two cases:\\

\textbf{Case 1:} $g_r \equiv 1, 3 \pmod{4}$.\\
In this case, we have $p^+(G^\sigma) = \lceil \frac{g_r}{2}\rceil$ if and only if
\[
t + p^+(P^\sigma_{n_1}) + p^+(P^\sigma_{n_2}) + \dots + p^+(P^\sigma_{n_t}) = 
\bigg \lceil \frac{t + n_1 + \dots + n_t}{2} \bigg \rceil = \frac{t + n_1 + \dots + n_t}{2}+1.
\]
This implies that:
\[
t - 1 = \big(n_1 - 2p^+(P^\sigma_{n_1})\big) + \big(n_2 - 2p^+(P^\sigma_{n_2})\big) + \dots + 
\big(n_t - 2p^+(P^\sigma_{n_t})\big).
\]
Since $p^+(P^\sigma_n) = \lfloor n/2 \rfloor$ by Lemma \ref{Lemma3a}, we have $n_j-2p^+(P^\sigma_{n_j})=0$ if 
$n_j=$ even (where, $1 \leq n_j \leq t$) and $n_j-2p^+(P^\sigma_{n_j})=1$ if $n_j=$ odd (where, $1 \leq n_j \leq t$). 
Thus, we conclude that either $t = 1$ or exactly one path has an even number of vertices (including zero) 
between two consecutive pendant stars, while the others have an odd number of vertices, as required.\\

\textbf{Case 2:} $g_r \equiv 0, 2 \pmod{4}$.\\
In this case, we have $p^+(G^\sigma) = \lceil \frac{g_r}{2}\rceil$ if and only if
\[
t + p^+(P^\sigma_{n_1}) + p^+(P^\sigma_{n_2}) + \dots + p^+(P^\sigma_{n_t}) = 
\bigg \lceil \frac{t + n_1 + \dots + n_t}{2} \bigg \rceil = \frac{t + n_1 + \dots + n_t}{2}.
\]
This implies that:
\[
t = \big(n_1 - 2p^+(P^\sigma_{n_1})\big) + \big(n_2 - 2p^+(P^\sigma_{n_2})\big) + \dots + 
\big(n_t - 2p^+(P^\sigma_{n_t})\big).
\]
By similar reasoning as in Case 1, we have $n_j-2p^+(P^\sigma_{n_j})=0$ if $n_j=$ even (where, $1 \leq n_j \leq t$) 
and $n_j-2p^+(P^\sigma_{n_j})=1$ if $n_j=$ odd (where, $1 \leq n_j \leq t$). Therefore, we conclude that either 
$t = 1$ or all paths must have an odd number of vertices between any two pendant stars, as required.
\end{proof}

\begin{lemma}\label{Lemma7}
Let \( G^\sigma \) be a connected signed graph with girth \( g_r \) which is not a canonical unicyclic signed graph. 
Then, \( p^+(G^\sigma) = \lceil \frac{g}{2} \rceil \) 
if and only if \( G^\sigma \) is isomorphic
to one of the following graphs.\\
(i) A signed graph $G^\sigma$ obtained from a balanced signed cycle $C^\sigma_{g_r}$ with $g_r \equiv 0,1 \pmod 4$ and 
signed star $S^\sigma_{t+1}$ by joining a vertex of $C^\sigma_{g_r}$ to the center of $S^\sigma_{t+1}$, 
where $t \geq 1$.\\
(ii) A signed graph $G^\sigma$ obtained from an unbalanced signed cycle $C^\sigma_{g_r}$ with $g_r \equiv 2,3 \pmod 4$ 
and signed star $S^\sigma_{t+1}$ by joining a vertex of $C^\sigma_{g_r}$ to the center of $S^\sigma_{t+1}$, 
where $t \geq 1$.\\
(iii) Bicyclic signed graphs \( B^\sigma(5, 4, 5) \) and \( B^\sigma(5, 5, 5) \) with balanced cycles or a signed 
bicyclic graph \( B^\sigma(5, 3, 5) \) whose both cycles are negative, depicted in Figure \ref{Bicyclic}.
\end{lemma}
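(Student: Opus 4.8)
The plan is to characterize connected signed graphs $G^\sigma$ with girth $g_r$ that are \emph{not} canonical unicyclic yet still satisfy $p^+(G^\sigma) = \lceil \frac{g_r}{2}\rceil$, i.e.\ the smallest positive inertia index strictly above the girth lower bound, once the cycle and complete-multipartite equality cases are removed. First I would fix a shortest cycle $C^\sigma$ of length $g_r$ in $G^\sigma$. The key structural input is that we are one unit above the minimum, so the ``budget'' is tight: by Lemma~\ref{Lemma01} any induced subgraph contributes at most $p^+(G^\sigma) = \lceil \frac{g_r}{2}\rceil$ positive eigenvalues, and by Lemma~\ref{Lemma5}, whenever equality $p^+(C^\sigma)=p^+(G^\sigma)$ holds the graph has no vertex at distance $\geq 2$ from $C^\sigma$. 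The first case split is therefore on whether $p^+(C^\sigma) = \lceil \frac{g_r}{2}\rceil$ or $p^+(C^\sigma) = \lceil \frac{g_r}{2}\rceil - 1$, which by Lemma~\ref{Lemma3} is exactly a parity/balance condition on $g_r$ and $\sigma(C^\sigma)$.

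In the first branch, $p^+(C^\sigma) = p^+(G^\sigma) = \lceil \frac{g_r}{2}\rceil$, so Lemma~\ref{Lemma5} forces every extra vertex to lie in $N_1(C^\sigma)$. Since $G^\sigma$ is not canonical unicyclic, some vertex $y \in N_1(C^\sigma)$ must be adjacent to $C^\sigma$ in a way that either creates a second short cycle or attaches more than a pendant star; I would use Lemma~\ref{Lemma001} (pendant-vertex deletion) to peel off any genuine pendant structure and Lemma~\ref{Lemma04} to bound the length of any new path joining two cycle vertices, forcing $\lceil \frac{g_r}{2}\rceil \le k$. Tracking how each attachment raises $p^+$ by exactly the contribution of a signed path (Lemma~\ref{Lemma3a}) keeps a running total; the budget $\lceil \frac{g_r}{2}\rceil$ then leaves essentially no room, which is what isolates the star-attachment graphs of parts (i)--(ii): a single signed star $S^\sigma_{t+1}$ whose center is joined to one cycle vertex, with the cycle balanced for $g_r \equiv 0,1$ and unbalanced for $g_r \equiv 2,3$ so that $p^+(C^\sigma) = \lceil \frac{g_r}{2}\rceil - 1$ and the central edge deletion via Lemma~\ref{Lemma001} supplies the final $+1$.

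The second branch, where $p^+(C^\sigma) = \lceil \frac{g_r}{2}\rceil - 1$ but $p^+(G^\sigma) = \lceil \frac{g_r}{2}\rceil$, is where the bicyclic sporadic examples of part (iii) arise. Here $G^\sigma$ may have vertices at distance $\geq 2$, but the gap of exactly one positive eigenvalue severely constrains the structure. I would argue that a second independent cycle must be present (otherwise we are back to the unicyclic/star cases), and apply Lemma~\ref{Lemma04} to \emph{both} cycles: each internally disjoint path joining two cycle vertices has length at least $\lceil \frac{g_r}{2}\rceil$. For a bicyclic graph realized as $B^\sigma(k,l,m)$, three internally disjoint paths between two branch vertices each of length $\ge \lceil \frac{g_r}{2}\rceil$ forces $g_r \le$ a small constant, pinning $g_r$ and the path parameters to the finitely many candidates $(5,4,5)$, $(5,5,5)$, $(5,3,5)$. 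A direct computation of $p^+$ on these finitely many signed bicyclic graphs (using the eigenvalue formulas of Lemmas~\ref{Lemma2}--\ref{Lemma1} on their cycles together with Lemma~\ref{Lemma001}) then selects the balanced/negative sign patterns stated.

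The main obstacle I anticipate is the bicyclic branch: unlike the unicyclic case, Lemma~\ref{Lemma001} does not immediately reduce a theta-graph, so one cannot simply sum path contributions. The hard part will be showing that the length constraints from Lemma~\ref{Lemma04} really do bound $g_r$ (and hence the number of candidate graphs) to a finite list, and then verifying that among all sign patterns on $B^\sigma(5,4,5)$, $B^\sigma(5,5,5)$, $B^\sigma(5,3,5)$ only the claimed balanced-cycle and double-negative-cycle configurations achieve $p^+(G^\sigma) = \lceil \frac{g_r}{2}\rceil$ while all others strictly exceed it. This likely requires an explicit eigenvalue or inertia computation (via congruence/elementary reductions on the adjacency matrix) for each sporadic case, which is the one place routine but unavoidable calculation enters.
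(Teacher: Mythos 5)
Your overall strategy for the necessity direction (fix a shortest cycle $C^\sigma$, split on whether $p^+(C^\sigma)$ equals $p^+(G^\sigma)$ or falls one short, and use Lemma~\ref{Lemma5} and Lemma~\ref{Lemma04} to control the structure) is the right one, and it is worth noting that it is \emph{more} than the paper itself writes down: the paper only verifies sufficiency by explicit pendant deletions (Lemmas~\ref{Lemma001} and \ref{Lemma3}) and a computation on the three bicyclic graphs, and then disposes of necessity with a one-line citation to the unsigned case \cite[Lemma 3.3]{DuanYang2024}. However, two concrete steps in your plan fail. First, the families (i)--(ii) cannot arise in your first branch $p^+(C^\sigma)=p^+(G^\sigma)=\lceil g_r/2\rceil$: in those graphs the leaves of $S^\sigma_{t+1}$ lie at distance $2$ from $C^\sigma_{g_r}$, so $N_2(C^\sigma)\neq\emptyset$ and Lemma~\ref{Lemma5} forces $p^+(C^\sigma)<p^+(G^\sigma)$. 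You even note that for these graphs $p^+(C^\sigma)=\lceil g_r/2\rceil-1$, which contradicts the branch hypothesis under which you claim to derive them. All three families of the lemma live in your second branch; the first branch has to be shown to contribute \emph{nothing} once canonical unicyclic graphs are excluded (every vertex sits in $N_1(C^\sigma)$ there, so non-canonical structure forces a path of length $2$ or $3$ between cycle vertices, hence $g_r\le 6$ by Lemma~\ref{Lemma04}, and these small-girth cases must be eliminated by hand).

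Second, Lemma~\ref{Lemma04} does not bound $g_r$ in the bicyclic branch, so your finite list does not follow from it. For $B^\sigma(k,k,k)$ the third path has length $k-1=\lceil g_r/2\rceil$ with $g_r=2(k-1)$, so the conclusion of Lemma~\ref{Lemma04} is satisfied for every $k$; only the two smallest of the three path lengths are constrained by the girth, and the lemma says nothing that caps them. The finiteness has to come from the inertia budget: for instance, deleting the two branch vertices leaves three disjoint signed paths, and Lemmas~\ref{Lemma01} and \ref{Lemma3a} give $\sum_i\lfloor (n_i-2)/2\rfloor\le p^+(G^\sigma)=\lceil g_r/2\rceil$ (with sharper induced subgraphs, e.g.\ deleting only one branch vertex and using the matching number of the resulting spider, needed to kill borderline cases such as $B^\sigma(7,7,7)$). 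Finally, your exhaustion of the second branch is incomplete: besides a single star joined at distance one and theta graphs, you must also exclude longer pendant trees, cycles carrying both a chord and attached trees, and graphs with three or more independent cycles. None of these gaps is fatal to the approach, but as written the argument does not yet close, whereas the paper sidesteps all of this by importing the unsigned classification.
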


\begin{proof}
The bicyclic signed graph \( B^\sigma(5, 5, 5) \) has girth $8$ and \( B^\sigma(5, 4, 5) \) has girth $7$. By 
performing a simple operation as given in Figure \ref{Bicyclic}, we have
\[
p^+(B^\sigma(5, 5, 5)) = p^+(B^\sigma(5, 4, 5)) = 4 = \left\lceil \frac{g_r}{2} \right\rceil.
\]
Similarly, \( B^\sigma(5, 3, 5) \) has girth $6$. If both the cycles are negative, we have 
$p^+(B^\sigma(5, 3, 5)) = 3 = \lceil \frac{g_r}{2} \rceil$, see Figure \ref{Bicyclic}.\\

Next, assume that \( G^\sigma \) is a signed graph obtained from a balanced signed cycle \( C^\sigma_{g_r} \) 
with $g_r \equiv 0,1 \pmod 4$ and $S^\sigma_{t+1}$ by joining a vertex of \( C^\sigma_{g_r} \) to the 
center of $S^\sigma_{t+1}$. By removing a pendant vertex along with a center vertex of 
$S^\sigma_{t+1}$ from \( G^\sigma \), we have
\[
p^+(G^\sigma) = p^+(C^\sigma_{g_r}) + 1 = \left(\left\lceil \frac{g}{2} \right\rceil - 1\right) + 1 = 
\left\lceil \frac{g}{2} \right\rceil ,
\]
by Lemmas \ref{Lemma001} and \ref{Lemma3}.

If \( G^\sigma \) is a signed graph obtained from an unbalanced signed cycle \( C^\sigma_{g_r} \) with 
$g_r \equiv 2,3 \pmod 4$ and $S^\sigma_{t+1}$ by joining a vertex of \( C^\sigma_{g_r} \) to 
the center of $S^\sigma_{t+1}$. By removing a pendant vertex along with a center vertex of 
$S^\sigma_{t+1}$ from \( G^\sigma \), we have
\[
p^+(G^\sigma) = p^+(C^\sigma_{g_r}) + 1 = \left(\left\lceil \frac{g}{2} \right\rceil - 1\right) + 1 = \left\lceil \frac{g}{2} \right\rceil ,
\]
by Lemmas \ref{Lemma001} and \ref{Lemma3}.

The necessity part of the proof can be directly followed from the case of ordinary graphs, as established 
in \cite[Lemma 3.3]{DuanYang2024}, to the context of signed graphs.
\end{proof}

\begin{figure}[htbp!]
     \centering
  \includegraphics[width=12cm]{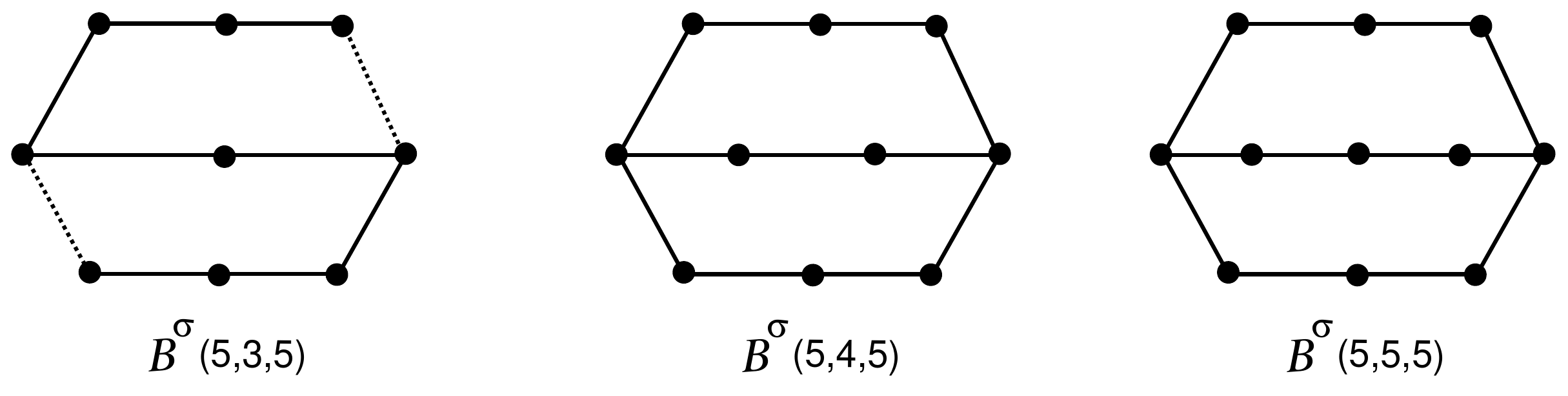}\\
  \caption{Bicyclic signed graphs \( B^\sigma(5, 3, 5) \), \( B^\sigma(5, 5, 5) \) and \( B^\sigma(5, 4, 5) \), 
  where the doted lines represent the negatived edges.}\label{Bicyclic}
\end{figure}

\begin{proof}[\tb {Proof of Theorem \ref{Theorem3}}]
By combining the results of Lemmas \ref{Lemma6} and \ref{Lemma7}, we establish the characterization of the 
connected signed graph \( G^\sigma \) with girth \( g_r \) and \( p^+(G^\sigma) = \lceil \frac{g_r}{2} \rceil \), 
and hence the proof of Theorem \ref{Theorem3} is completed.
\end{proof}

\newpage

\end{document}